\documentclass[12pt,a4paper]{article}
\usepackage{fullpage,amsfonts,amsmath,amssymb,graphicx}
\usepackage[all]{xy}
\usepackage{multirow}
\usepackage{diagbox}
\usepackage{arydshln}
\usepackage{bbold}
\usepackage{fix-cm} 
\usepackage{calrsfs}
\usepackage{calligra}
\DeclareFontFamily{T1}{calligra}{}
\DeclareFontShape{T1}{calligra}{m}{n}{<-> s * [1.80] callig15}{}
\DeclareMathAlphabet{\mathcalligra}{T1}{calligra}{m}{n}
\DeclareFontFamily{OT1}{pzc}{}
\DeclareFontShape{OT1}{pzc}{m}{it}{<-> s * [1.30] pzcmi7t}{}
\DeclareMathAlphabet{\mathpzc}{OT1}{pzc}{m}{it}
\DeclareMathAlphabet{\pazocal}{OMS}{zplm}{m}{n}

\usepackage[english,frenchb]{babel}
\usepackage[utf8x]{inputenc}
\usepackage{authblk}
\usepackage{enumerate}
\sloppy
\setlength{\parindent}{0pt}
\setlength{\parskip}{5pt plus 2pt minus 1pt}

\usepackage{amsthm}
\newtheorem{theorem}{Theorem}[section]

\newtheorem{proposition}[theorem]{Proposition}
\newtheorem{prop}[theorem]{Proposition}

\newtheorem*{theorem*}{Theorem}
\newtheorem*{lemma*}{Lemma}

\theoremstyle{definition}

  \usepackage{color}


\usepackage{tikz}
\usetikzlibrary{calc}

\tikzset{point/.style = {fill=gray,circle,inner sep=2pt}}


\newcommand\T{\rule{0pt}{2.6ex}} 
\newcommand\B{\rule[-1.2ex]{0pt}{0pt}} 
\newcommand\TopStrut[1]{\rule{0pt}{#1ex}} 

\newcommand\CC{{\mathbb C}}
\newcommand\FF{{\mathbb F}}

\newcommand\PP{{\mathbb P}}
\newcommand\QQ{{\mathbb Q}}
\newcommand\RR{{\mathbb R}}

\newcommand\ZZ{{\mathbb Z}}

\newcommand\JJJ{{\pazocal J}}

\newcommand{\Aut}{\operatorname{Aut}\nolimits}

\newcommand{\Cl}{\operatorname{Cl}\nolimits}

\newcommand\equi{{\ \Longleftrightarrow\ }}
\newcommand{\Fix}{\operatorname{Fix}\nolimits}

\newcommand{\id}{\operatorname{id}\nolimits}

\newcommand{\Stab}{\operatorname{Stab}\nolimits}

\renewcommand\mod{{\,\mathrm{mod}\,}}

\renewcommand\tilde[1]{\widetilde{#1}}
\renewcommand\bar[1]{\overline{#1}}


\newcommand\lra{{\longrightarrow}}

\newlength{\rrrr}

\newcommand{\intoo}[1]{\:
\xymatrix@1{\ar@{^(->}[r]^{#1}&}\:}

\newcommand{\ootni}[1]{\:
\xymatrix@1{&\ar@{_(->}[l]_(.3){#1}}\:}

\title{Action of the automorphism group on the Jacobian of Klein's quartic curve}

\author[$\dagger$]{Dimitri Markushevich} 
\author[$*$]{Anne Moreau}

\affil[$\dagger$]{Univ. Lille, CNRS, UMR 8524 -- Laboratoire Paul Painlev\'e, F-59000 Lille, France; 
e-mail: dimitri.markouchevitch@univ-lille.fr\newline\  }

\affil[$*$]{Facult\'e des Sciences d'Orsay, Universit\'e Paris-Saclay, 91405 Orsay, France; e-mail: 
anne.moreau@universite-paris-saclay.fr}

\batchmode

\begin{document}

\maketitle

\selectlanguage{english}

\section*{Introduction}

Klein's simple group $H_{168}$ of order 168 can be defined by  $H_{168}\simeq \mathbf{PSL}(2,7)\simeq \mathbf{GL}(3,2)$, where $\mathbf{GL}(n,q)$, resp. $\mathbf{PSL}(n,q)$ stands for the linear (resp. projective special linear) group of automorphisms of the $\FF_q$-vector space $\FF_q^n$, where $\FF_q$ is the finite field with $q$ elements. Klein introduced this group in 1879 \cite{Klein} and described its irreducible 3-dimensional complex representation by automorphisms of the plane quartic curve $C\subset\PP^2_\CC$ with equation $x^3y+y^3z+z^3x=0$, called Klein's quartic curve. See, for example \cite{Eightfold} and \cite{Ba-Itz} for a modern exposition, some applications and interesting ramifications.

Klein's simple group also appears in the context of groups generated by complex reflections. Consider it as a complex linear group acting on the 3-dimensional complex vector space $V\simeq \CC^3$, 
whose projectivization is the projective plane containing the Klein quartic: $\PP(V)=\PP^2_\CC$. This representation embeds $H_{168}$ into $\mathbf{SL}(3,\CC)$. If we extend this copy of $H_{168}$ by adding $-\id_{\CC^3}$, we will obtain a subgroup of $\mathbf{GL}(3,\CC)$ of order 336, which we will denote $G_{336}$. This extension of $H_{168}$ is not just split, it is simply a direct product: $G_{336}=\{\pm\id\}\times H_{168}$. In spite of the apparent triviality of this step, it brings in a new very important property: $G_{336}$ is one of the finite complex reflection groups classified by Shephard--Todd  \cite{Sh-To}; see also \cite{Co}
 for a simplified approach to the classification. 

On the other hand, the action of $G_{336}$ on $\CC^3$ is of arithmetic nature, as it preserves a rank-6 lattice in $\CC^3$.  One can easily see the existence of such a lattice $\Lambda$. Indeed, as $H_{168}$ acts on Klein's curve $C$, it also acts on its Jacobian $\JJJ=\JJJ(C)$, a 3-dimensional abelian variety. So we can represent $\JJJ(C)$ as the complex torus $\CC^3/\Lambda$, where $\Lambda$ is the period lattice of $C$, and then the action of $H_{168}$ lifts to a linear action on $\CC^3$ leaving invariant $\Lambda$. The fact that the action on $\CC^3$ is the same as that on $V$ can be verified by using the canonical identification $\JJJ= H^0(C,\Omega_C^1)^*/\Lambda$, and the action on $H^0(C,\Omega_C^1)$  can be deduced from the representation of the 1-forms on $C$ via the Poincaré residue. In this way we recover a lattice $\Lambda$, invariant under $G_{336}$, as the period lattice of $C$ in $H^0(C,\Omega_C^1)^*$.

As $G_{336}$ leaves invariant the lattice $\Lambda$, one can construct the extension $\widetilde{G}_{336}$ of $G_{336}$  by adding the translations by vectors from $\Lambda$:
\begin{equation}\label{G-tilde}
0\lra \Lambda \lra \widetilde{G}_{336} \lra G_{336}\lra 0.
\end{equation}
The thus obtained group $\widetilde{G}_{336}$ of affine transformations of $\CC^3$ is
a complex crystallographic group generated by reflections, or a CCR group for short.
Moreover, $G_{336}$ is the complete group of linear transformations leaving invariant $\Lambda$. Indeed, by Torelli theorem, the order of the automorphism group of $\JJJ(C)$ is twice the order of the automorphism group of $C$, and the latter is 168, which is the maximal order of the automorphism group of a curve of genus $g=3$ by the Hurwitz inequality $|\Aut(C)|\leq 84(g-1)$. 


The main object of interest of the present study is the quotient variety $X=\JJJ/G_{336}$, which can also be viewed as the quotient $\CC^3/\widetilde{G}_{336}$ by the CCR group. 
This quotient can be thought of as the projective spectrum of the algebra of $G_{336}$-invariant theta functions for $\JJJ$. For finite reflection groups acting on $\CC^n$, we have the Chevalley--Shephard--Todd Theorem, which states that the algebra of polynomial invariants of the action is also polynomial, that is freely generated by $n$ basic generators. 
It is a natural conjecture that the analogue of the Chevalley--Shephard--Todd Theorem also holds for irreducible affine CCR groups. The conjecture can be stated in other words by saying that for such a group $\Gamma$, the quotient variety $\CC^n/\Gamma$ is a weighted projective space. This conjecture, taken in full generality, persists for more than 40 years, since Looijenga \cite{Loo} established the result for the CCR groups $\Gamma$ obtained as the extensions of the Weyl group of a real irreducible root system in $\RR^n$ by a complexification of its root lattice. Such complexified real crystallographic reflection groups depend on one complex parameter $\tau$. Several papers generalized and improved this result in several ways, and at present it is known to be true for all CCR groups of Coxeter type (\cite{Be-Sch1}-\cite{Be-Sch3}, \cite{Kac-P}, \cite{W}, \cite{FMW}).

The conjecture was also claimed to be proven in dimension two, see \cite{Schw1}, \cite{TY}, \cite{KTY}, but the proofs were based on an incomplete classification of rank-2 CCR groups. For example, as we know from \cite{Deraux}, \cite{KRR}, the weighted projective plane $\PP(1,3,8)$ is a CCR quotient, but it is missing in the above references; see also \cite{Po}, \cite[\S5]{GM}.
In dimension $>2$, not a single result of this type is known for any one of the genuinely complex crystallographic reflection groups, i.~e. those which are not of Coxeter type. A classification of such groups can be found in \cite{Po}.  By contrast with the CCR groups of Coxeter type, genuinely complex CCR groups are all rigid: there is no continuous parameter $\tau$. According to Popov's classification, there exists a unique complex crystallographic reflection group with point group
$G_{336}$: it is listed as $[K_{24}]$ in Table 2 in loc. cit. (24 being the number of $G_{336}$ in the classification table of \cite{Sh-To}). From Popov's table, one also reads off the generators of the invariant lattice $\Lambda$ and the extension cocycle, which can only be  zero in this case. Thus an extension of $G_{336}$ by $\Lambda$ is always split, so that $\tilde G_{336}=\Lambda\rtimes G_{336}$ is a unique such extension, and the $G_{336}$-invariant lattice $\Lambda$ is unique modulo equivalence. We will use a slightly different, more symmetric representation of $\Lambda$ from \cite{Mazur}.

Our results on the singularities of $X$ make it plausible that $X$ is the weighted projective space $\PP(1,2,4,7)$. We look into the combinatorics of the action of $G_{336}$ and list the stabilizers and the orbits in $\JJJ$. As follows from Theorem \ref{SingX}, $X$ and $\PP(1,2,4,7)$ have the same singularities.

{\em Acknowledgements.} D.~M. was partially supported by the PRCI SMAGP (ANR-20-CE40-0026-01) and the Labex CEMPI  (ANR-11-LABX-0007-01). A.M. is partially supported by ANR Project GeoLie Grant number ANR-15-CE40-0012. The authors thank T.~Dedieu et X.~Roulleau  for discussions.

\section{Klein's group $H_{168}$, its double $G_{336}$ and the invariant lattice $\Lambda$}
\label{prelim-on-H168}

We introduce the group $G=G_{336}$ directly in its embedding in $\mathbf U(3)$ as the group generated by reflections in the roots of the complex root system, usually denoted $J_3(4)$, but we will fix the notation ${\pazocal R}$ for it. 
We describe it following \cite[pp. 235-236]{Mazur}. The root system ${\pazocal R}$ is the set of vectors of $\CC^3$, obtained from $(2,0,0)$, $(0,\alpha,\alpha)$ and $(1,1,\bar\alpha)$, where $\alpha=\frac{1+i\sqrt 7}{2}$, by sign changes and permutations of coordinates. The root lattice $\Lambda=Q({\pazocal R})$ generated by ${\pazocal R}$ can be given by
$$
\Lambda=\{(z_1,z_2,z_3)\in{\pazocal O}^3\ | \ z_1\equiv z_2 \equiv z_3\ \mathrm{mod}\ \alpha,\ 
z_1+z_2+z_3\equiv 0 \ \mathrm{mod}\ \bar\alpha\},
$$
where $\pazocal O=\ZZ+\ZZ\alpha=\ZZ[\alpha]$ is the ring of integers of the quadratic field $K=\QQ(\alpha)$.
The group $G$ is the subgroup of $\mathbf U(3)$ leaving invariant $\Lambda$.
The translations by $\Lambda$ extend $G$ to an affine crystallographic reflection group $\tilde G$.

The standard Hermitian scalar product of $\CC^3$ is not primitive when restricted to $\Lambda$, so 
we will endow $\CC^3$ with the Hermitian scalar product which is half the standard one:
$$
\forall x,\ y\in\CC^3,\ \ (x,y):=\frac12\sum_{i=1}^3\bar x_iy_i.
$$
With these definitions, ${\pazocal R}$ contains 42 roots $e$, all of them being of square 2: $(e,e)=2$. They are divided in 21 pairs of opposite roots $\pm e$. Choosing one representative from each pair in an arbitrary way, we obtain the subset $\pazocal R_0$ of 21 roots which will be called positive roots. The group $G$ is generated by the 21 reflections in the positive roots $e\in \pazocal R_0$,
$$
r_e:\CC^3\to\CC^3,\ \ x\mapsto x-(e,x)e, 
$$
and Klein's simple group is the unimodular part of $G$:
$$
H_{168}=\{h\in G\ | \ \det (h)=1\}.
$$
It can be thought of as the group generated by the 21 antireflections $\rho_e:=-r_e$, or by products $r_er_{e'}$ of pairs of reflections ($e,e'\in \pazocal R_0$). These generating sets are redundant; to generate $G$, it suffices to use three reflections. We choose the three ``basic'' roots as $e_1=(0,\alpha,\alpha)$, $e_2=(0,0,2)$ and $e_3=(1,1,\bar\alpha)$ in such a way that the corresponding generators of $G$ are the same as chosen in \cite[(10.1)]{Sh-To}:
$$
r_1=r_{e_1}={\scriptsize \begin{pmatrix}
        1&0&0\\
        0&0&1\\
        0&1&0\end{pmatrix}},\ 
r_2=r_{e_2}={\scriptsize \begin{pmatrix}
        1&0&0\\
        0&1&0\\
        0&0&{-1}\end{pmatrix}}, \ 
r_3=r_{e_3}=\tfrac12 {\scriptsize \begin{pmatrix}
        1&{-1}&{-\alpha}\\
        {-1}&1&{-\alpha}\\
        -\bar\alpha&-\bar\alpha&0\end{pmatrix}}.
$$
These generators satisfy the following relations:
$$
r_1^2=r_2^2=r_3^2=(r_1r_2)^4=(r_2r_3)^4=(r_3r_1)^3=(r_1r_2r_1r_3)^3=1.
$$
By loc. cit., this is a presentation of $G$ by generators and relations.

Obviously, $\rho_i=-r_i$ ($i=1,2,3$) generate ${H_{168}}$. As a minimal set of generators of ${H_{168}}$ one can choose 
$$r_3r_1=\tfrac12{\scriptsize \begin{pmatrix}
        1&{-\alpha}&{-1}\\
        {-1}&{-\alpha}&1\\
        -\bar\alpha&0&-\bar\alpha\end{pmatrix}}\  \mbox{and}\ \ r_1r_2={\scriptsize \begin{pmatrix}
        1&0&0\\
        0&0&{-1}\\
        0&1&0\end{pmatrix}}; \ \ (r_3r_1)^3=(r_1r_2)^4=1.$$

The orders of elements of $G$ are 1, 2, 3, 4, 6, 7, 14.
An example of an element of maximal order in $G$ (an analogue of a Coxeter element) is
\begin{equation}\label{ord14}
r_1r_2r_3= \tfrac12{\scriptsize \begin{pmatrix}
        1&{-1}&{-\alpha}\\
        \bar\alpha&\bar\alpha&0\\
        {-1}&1&{-\alpha}\end{pmatrix}},\ \ 
        (r_1r_2r_3)^7=-1.
\end{equation}

Remark that $\Lambda$ is a free $\pazocal O$-module of rank 3, generated by the basic roots $e_1,e_2,e_3$ introduced above:
$$
\Lambda={\pazocal O} e_1+{\pazocal O} e_2+{\pazocal O} e_3.
$$
This representation of $\Lambda$ implies that the elements of $H$ and $G$ can be given by matrices from $M_3(\pazocal O)$ in the basis $(e_1,e_2,e_3)$. The disadvantage of this representation is that it is not unitary. 
So we stick to the representation of $G$ by unitary matrices in the standard basis of $\CC^3$ from which we started, though the elements of these unitary matrices are half-integers from $\pazocal O$. The columns of each matrix in $G$ are roots from $\pazocal R$ divided by 2, so making a complete list of elements of $G$ amounts to the enumeration of all the triples of mutually orthogonal roots in $\pazocal R$.
Over $\ZZ$, we will fix $$(\epsilon_1,\ldots,\epsilon_6)=(\alpha e_1,\alpha e_2,\alpha e_3,\bar\alpha e_1,\bar\alpha e_2,\bar\alpha e_3)$$ as the ``standard'' $\ZZ$-basis of $\Lambda$.

The famous equation of Klein's quartic $x^3y+y^3z+z^3x=0$ is referred to coordinates in which an order-7 element of ${H_{168}}$ is diagonalized with eigenvalues $\zeta,\ \zeta^4,\ \zeta^2$,
where $\zeta=\exp\frac{2\pi i}7$, but in the coordinates used in our representation it becomes
$$
x^4 + y^4 + z^4 - 3 \bar\alpha (x^2y^2 + x^2z^2 + y^2z^2)=0.
$$

The next table from \cite{CL} provides a list of the 15 conjugacy classes of subgroups of ${H_{168}}$ with their minimal overgroups and maximal subgroups; these data determine a structure of a lattice (partially ordered set) on the set of subgroups of ${H_{168}}$. The notation for groups used in the column ``Structure'' is standard for papers in the theory of finite groups; we explain some of them that are unusual in other fields of mathematics: $n$ is a cyclic group of order $n$; $m^n$ is the direct product of $n$ copies of a cyclic group of order $m$; $N:L$ is a semi-direct product of $N$ and $L$ with $N$ a normal subgroup; $L_n(q)$ is what we denote $PSL(n,q)$, so that $L_2(7)\simeq {H_{168}}$. The repetition of a type of a subgroup means that there are two orbits under conjugation, their lengths are given in the column ``Length''. The last two columns refer to subgroups by their numbers from the first column, the integers between parentheses indicating the number of distinct subgroups of given type that are minimal overgroups or maximal subgroups for the subgroup from the current line.
\begin{center}
\begin{tabular}{|c|l|c|c|l|l|}
\hline
Nr.&Structure&Order&Length&Maximal Subgroups&Minimal Overgroups\\
\hline
1&$L_2(7)$&168&1& 2 (7), 3 (7), 4 (8) & \\
\hline
2&$2^2:S_3$&24&7&5, 7 (3), 9 (4)&1\\
\hline
3&$2^2:S_3$&24&7&6, 7 (3), 9 (4)&1\\
\hline
4&$7:3$&21&8&8, 13 (7)&1\\
\hline
5&$A_4$&12&7&10, 13 (4)&2 \\
\hline
6&$A_4$&12&7&11, 13 (4)&3 \\
\hline
7&$D_8$&8&21&10, 12, 11&2, 3\\
\hline
8&7&7&8&15&4\\
\hline
9&$S_3$&6&28&13, 14 (3)&2, 3\\
\hline
10&$2^2$&4&7&14 (3)&5, 7 (3)\\
\hline
11&$2^2$&4&7&14 (3)&6, 7 (3)\\
\hline
12&4&4&21&14&7\\
\hline
13&3&3&28&15&4 (2), 5, 6, 9\\
\hline
14&2&2&21&15&9 (4), 10, 11, 12\\
\hline
15&1&1&1&&8 (8), 13 (28), 14 (21)\\
\hline
\end{tabular}
\end{center}
\smallskip

We will not list all the subgroups of $G$, but just note that each subgroup $K$ of ${H_{168}}$ has a degree-two extension in $G$, denoted $\pm K$:
$$\pm K=\langle -1, K\rangle=\{\pm k\ | \ k\in K\}\simeq\{\pm 1\}\times K.$$
Of course, $G$ also has other types of subgroups.

For future reference, we provide some explicit examples of subgroups of ${H_{168}}$ from the table: 
\begin{multline}
D_8=\langle s,t\ | \ s^4=t^2=1,\ tst=s^{-1}\rangle=\\ 
\{1,s=h_4,h_4^2,h_4^3,t=\rho_1,\rho_2,\rho_2h_4,h_4\rho_1\},\
h_4=\rho_1\rho_2;
\label{D8}
\end{multline}
\begin{equation}\label{ord7}
7\simeq G_7=\{1,g_7,\ldots,g_7^6\},\ g_7=\rho_1\rho_2\rho_3=-r_1r_2r_3=
{\scriptsize \tfrac12\begin{pmatrix}
{-1}&1&\alpha\\
-\bar\alpha&-\bar\alpha&0\\
1&{-1}&\alpha\end{pmatrix}};
\end{equation}
\begin{equation}\label{ord21}
7:3\simeq G_{21}=\langle g_7,h_3\ | \ g_7^7=h_3^3=1,\ h_3g_7h_3^{-1}=g_7^2\rangle,\  \ h_3=\rho_1\rho_3\rho_1\rho_2;
\end{equation}
\begin{multline}
2^2:S_3\simeq S_4\simeq G_{24}=\left\{ \gamma={\scriptsize \begin{pmatrix}
        \pm1&0&0\\
        0&{\pm1}&0\\
        0&0&{\pm1}\end{pmatrix}, \  
        \begin{pmatrix}
        \pm1&0&0\\
        0&0&{\pm1}\\
        0&\pm1&0\end{pmatrix}, \  
        \begin{pmatrix}
        0&0&{\pm1}\\
        0&{\pm1}&0\\
        {\pm1}&0&0\end{pmatrix}},\right. \\
        \left.\left. {\scriptsize \begin{pmatrix}
        0&{\pm1}&0\\
        {\pm1}&0&0\\
        0&0&{\pm1}\end{pmatrix}, \ 
         \begin{pmatrix}
        0&0&{\pm1}\\
        {\pm1}&0&0\\
        0&\pm1&0\end{pmatrix}}, \ \mbox{or} \ \
         {\scriptsize \begin{pmatrix}
        0&{\pm1}&0\\
        0&0&{\pm1}\\
        {\pm1}&0&0\end{pmatrix}}
        \ \ \right| \ \ \det \gamma = 1\right\}.\label{monomial}
\end{multline}

In the next table we list the conjugacy classes of $H$:\smallskip

\begin{center}
\begin{tabular}{|c|c|c|c|c|c|c|}
\hline
\ \ \ \ ord\,$(\gamma)$\ \ \ \ & 1 & 2 & 3 & 4 & 7 & 7 \\
\hline
\ \ \ \ $|\mathrm{Cl}_H(\gamma)|$\ \ \ \  &1&21&56&24&24&24\ \\
\hline
\ \ \ \  \ $\gamma$\ \  \ \ \  \ &1&$\rho_1$&$h_3$&$h_4$&$g_7$&$g_7^{-1}$\ \\
\hline
\end{tabular}
\end{center}
\smallskip
The representatives $h_3,h_4,g_7$ are defined in \eqref{D8}-\eqref{ord21}. The conjugacy classes of $G$ are deduced from these in an obvious way: to every conjugacy class $\Cl_H(\gamma)$ in $H$ correspond two conjugacy classes in $G$ of the same length: $\Cl_G(\gamma)=\Cl_H(\gamma)$ and $\Cl_G(-\gamma)=-\Cl_H(\gamma)$.

\section{Fixed loci of elements of $G_{336}$ acting on $\JJJ=\CC^3/\Lambda$}

We divide the elements of $G$ in two classes, elliptic and parabolic; the parabolic ones are defined as those having 1  among their eigenvalues, and all the remaining elements are called elliptic. The elliptic elements are $-1$, the 42 elements of order 4 with determinant $-1$, the 56 elements of order 6, and those of order 7 and 14. There are also 42 elements of order 4 with determinant 1, but they are parabolic. For both orders 7 and 14, there are two conjugacy classes of length 24, but what we need for enumerating the fixed points is the number of cyclic subgroups generated by them, and there are fewer classes of elliptically generated cyclic subgroups. 

\begin{prop}\label{elliptic}
$G$ has the following cyclic subgroups generated by elliptic elements:
\begin{enumerate}[i)]
\item One group of order $2$, $C_2=\{\pm 1\}$, with $64$ fixed points in $\JJJ$ that are images of the half-periods of $\Lambda$:
$$
\{\xi_0,\ldots,\xi_{63}\}=\left\{ \sum_{i=1}^6x_i\epsilon_i,\ x_i\in\left\{0,\tfrac12\right\}\right\}.
$$
\item One conjugacy class of $21$ cyclic subgroups of order $4$, $C_4^{(1)},\ldots,C_4^{(21)}$, having each $16$ fixed points in $\JJJ$. Choosing $C_4^{(1)}=\langle h'_4\rangle$, $h'_4=-r_1r_2:(z_1,z_2,z_3)\mapsto (-z_1,z_3,-z_2)$, we find the representatives of the $16$ fixed points of $h'_4$ in the form
$$
\{\beta_0,\ldots,\beta_{15}\}=\iota_0(1,0,0)+\iota_1(\alpha,0,0)+\iota_2(\tfrac{\alpha}2,\tfrac{\alpha}2,-\tfrac{\alpha}2)
+\iota_3(\tfrac{\bar\alpha}2,1,0), \ \ \iota_k\in\left\{0,1\right\}.
$$
\item One conjugacy class of $28$ cyclic subgroups of order $6$, $C_6^{(1)},\ldots,C_6^{(28)}$, having each $4$ fixed points in $\JJJ$. Choosing $C_6^{(1)}=\langle c\rangle$ with $c=(z_1,z_2,z_3)\mapsto (-z_3,-z_1,-z_2)$
we identify the representatives of the $4$ fixed points in $\Pi$ as:
$$
\omega_{ij}=\frac{i}2(\bar\alpha,\bar\alpha,\bar\alpha)+j(1,1,1), \ (i,j)\in\{0,1\}^2,
$$
so that $\omega_{00}=0$ and the remaining $3$ points $\omega_{ij}$ belong to the set of $64$ fixed points of $C_2$ from item i).
\item One conjugacy class of $8$ subgroups $C_7^{(1)},\ldots,C_7^{(8)}$ of order $7$, having each $7$ fixed points on $\JJJ$. Choosing $C_7^{(1)}=\langle g_7\rangle$, where $g_7$ is defined in \eqref{ord21}, we find the following representatives of the $7$ fixed points of $C_7^{(1)}$:
$$
\eta_0=0,\ \eta_i=\frac17\Big(-i\epsilon_1-i\epsilon_2+i\epsilon_3+i\epsilon_4+i\epsilon_5-i\epsilon_6\Big),
 \ i=1, \dots, 6.
$$
\item One conjugacy class of $8$ cyclic subgroups $C_{14}^{(1)},\ldots,C_{14}^{(8)}$ of order $14$, having each a unique fixed point, the zero of $\JJJ$.
\end{enumerate}
\end{prop}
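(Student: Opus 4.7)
My plan reduces all five cases to a common scheme: identify the elliptic conjugacy classes of $G$, count the cyclic subgroups they generate, apply a determinantal fixed-point formula, and then exhibit explicit representatives. The universal tool is that, for an elliptic $g\in G$, the map $g-1$ is invertible on $\CC^3$ and induces a bijection
\[
\Fix_{\JJJ}(g)\;\simeq\;\Lambda/(g-1)\Lambda, \qquad \#\Fix_{\JJJ}(g)\;=\;\bigl|\det\nolimits_{\ZZ}(g-1)\bigr|\;=\;\bigl|\det\nolimits_{\CC}(g-1)\bigr|^{2}.
\]

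First I would enumerate the elliptic classes by an eigenvalue analysis. Integrality of the $\ZZ$-trace of $g$ on $\Lambda\simeq\ZZ^6$, combined with $\det g\in\{\pm 1\}$ and $\ord g\in\{1,2,3,4,6,7,14\}$, forces the triple of $\CC$-eigenvalues of $g$ to be $\{1,\omega,\omega^{2}\}$ in order $3$, $\{1,i,-i\}$ for the $42$ order-$4$ elements of $H$, $\{-1,i,-i\}$ for the $42$ order-$4$ elements of $G\setminus H$, $\{-1,-\omega,-\omega^{2}\}$ in order $6$, and a primitive $7$th (resp.\ $14$th) root triple in orders $7$ and $14$. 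Only $-1$ and the last four types are elliptic, contributing $1,42,56,48,48$ elements of orders $2,4,6,7,14$; dividing by $\phi(n)$ yields the $1,21,28,8,8$ cyclic subgroups asserted in (i)--(v). In orders $2,4,6$ the elliptic elements form a single $G$-conjugacy class; in orders $7,14$ there are two element-classes, but each cyclic subgroup meets both, so the cyclic subgroups themselves still form a single $G$-orbit.

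Inserting the eigenvalue lists into the determinant formula and using $|1+\omega|^{2}=|1+\omega^{2}|^{2}=1$, $\prod_{k=1}^{6}(\zeta_{7}^{k}-1)=\Phi_{7}(1)=7$, and $\Phi_{14}(1)=1$, I read off the respective cardinalities $64,16,4,7,1$. It then remains to pin down explicit representatives. For $g=-\id$ the equation $2x\in\Lambda$ yields the $2^{6}$ half-periods. The generator $c$ of $C_{6}^{(1)}$ satisfies $c^{3}=-\id$, so $\Fix(c)\subseteq\Fix(-\id)$, and a direct search among the half-periods isolates the four points $\omega_{ij}$. For $C_{14}^{(1)}$, the identity $(-g_{7})^{7}=-\id$ gives $\Fix(-g_{7})\subseteq\Fix(g_{7})\cap\Fix(-\id)$, and among the seven points of $\Fix(g_{7})$ only $x=0$ is $2$-torsion. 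For the generators $h_{4}'$ of $C_{4}^{(1)}$ and $g_{7}$ of $C_{7}^{(1)}$, I would solve $(g-1)x\in\Lambda$ over $\pazocal O=\ZZ[\alpha]$ using that $\Lambda$ is free of rank $3$ over $\pazocal O$ on $(e_{1},e_{2},e_{3})$; verifying that the listed sixteen, resp.\ seven, points are fixed and pairwise inequivalent modulo $\Lambda$, and combining this with the count from the determinant formula, gives completeness.

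The main obstacle is precisely this last inversion of $g-1$ on $\Lambda$ in cases (ii) and (iv), where the fixed locus is not contained in the $2$-torsion: one must carefully track the two congruences modulo $\alpha$ and $\bar\alpha$ defining $\Lambda\subset\pazocal O^{3}$, and work with the non-standard Hermitian form $(x,y)=\tfrac12\sum_{i}\bar x_{i}y_{i}$. The factor $\tfrac17$ in $\eta_{i}$ reflects $|\det_{\CC}(g_{7}-1)|^{2}=7$ and the fact that $\Lambda/(g_{7}-1)\Lambda\simeq\ZZ/7\ZZ$ is cyclic, which forces $\Fix(g_{7})$ to be an order-$7$ cyclic subgroup of $\JJJ$.
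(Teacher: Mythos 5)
Your proposal follows essentially the same route as the paper: both rest on the identity $\#\Fix_\JJJ(\gamma)=[(\gamma-\id_{\CC^3})^{-1}\Lambda:\Lambda]=|\det(\gamma-\id_{\CC^3})|^2$ and then exhibit the representatives by solving $(\gamma-\id)z\in\Lambda$ directly; the paper computes the determinants from the explicit matrices (getting $-8,-4,-2,i\sqrt7,-1$) and, for order $7$, searches the unit cube of $\RR^6$ in the $\ZZ$-basis $(\epsilon_i)$, whereas you evaluate the same determinants from the eigenvalues and obtain the subgroup counts by dividing class sizes by $\phi(n)$ --- a cosmetic difference. One caveat: integrality of the $\ZZ$-trace together with $\det g=\pm1$ and the order does \emph{not} by itself force the order-$4$ elements of $H$ to have spectrum $\{1,i,-i\}$, since the triple $\{i,i,-1\}$ also has determinant $1$ and $\ZZ$-trace $-2$; were it the actual spectrum, those $42$ elements would be elliptic and your enumeration would change. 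The gap closes immediately by noting that traces of elements of $G$ lie in $\QQ(\sqrt{-7})$, which excludes $2i-1$, or simply by reading the spectrum off the explicit representative $h_4=r_1r_2$ of the unique order-$4$ class of $H$.
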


\begin{proof}
Let $\gamma$ be an elliptic element and $z\in \CC^3$ a fixed point of $\gamma$ modulo $\Lambda$. This means that
$\gamma z-z\in\Lambda$, or else $z\in (\gamma -\id_{\CC^3})^{-1}(\Lambda)$. Thus the number of fixed points modulo $\Lambda$ is equal to $[(\gamma -\id_{\CC^3})^{-1}(\Lambda):\Lambda]$.
Hence to evaluate the number of fixed points on $\JJJ$, it suffices to calculate the determinant of $\gamma -\id_\Lambda$, where $\gamma$ is viewed as an automorphism of the rank-6 $\ZZ$-module $\Lambda$. When working with $3\times 3$ complex matrices, this determinant becomes $|\det (\gamma -\id_{\CC^3})|^2$. The calculation of $\det (\gamma -\id_{\CC^3})$ for $\gamma = -1,\ h'_4,\ c,\ {g_7},\ -{g_7}$ gives, respectively, the values $-8,\ -4,\ -2,\ i\sqrt{7},\ -1$. This implies the assertion on the numbers of fixed points. The explicit representatives produced in the statement are obtained by a direct calculation. It is quite easy for the orders $<7$, and for order $7$, we wrote down ${g_7}$ by an integer matrix in the $\ZZ$-basis $(\epsilon_i)$ of $\Lambda$ and searched for the fixed points in the unit cube of $\RR^6$. We omit further details.
\end{proof}

The non-elliptic elements different from 1 have fixed loci of positive dimension in $\JJJ$, which are translates of elliptic curves or abelian surfaces. We denote the eigenspace of $\gamma$ corresponding to an eigenvalue $\lambda$ by $V^{(\gamma)}_\lambda$, or simply by $V_\lambda$. We also denote $\Lambda^{(\gamma)}_\lambda$ or just $\Lambda_\lambda$ the intersection
$\Lambda\cap V^{(\gamma)}_\lambda$. When this is a full-rank lattice
in $V^{(\gamma)}_\lambda$, the quotient $\JJJ^{(\gamma)}_\lambda=\JJJ_\lambda:=V^{(\gamma)}_\lambda/\Lambda^{(\gamma)}_\lambda$ is an abelian variety of dimension $\dim V^{(\gamma)}_\lambda$. 

When $\lambda=1$ is among the eigenvalues of $\gamma$, $\JJJ^{(\gamma)}_1$ is the connected component of 0 in the fixed locus $\JJJ^\gamma=\Fix_\JJJ(\gamma)$, but the latter fixed locus can contain several connected components, which are translates of $\JJJ^{(\gamma)}_1$. 
The number of components can be determined as follows. Let $\Lambda^{(\gamma)}_{\mathrm a},V^{(\gamma)}_{\mathrm a}$ be the anti-invariant parts of $\gamma$ in $\Lambda$, respectively $V$, that is the orthogonal complements of $\Lambda^{(\gamma)}_1,V^{(\gamma)}_1$,  and $\JJJ_{\mathrm a}=V^{(\gamma)}_{\mathrm a}/\Lambda^{(\gamma)}_{\mathrm a}$ (the superscript ${(\gamma)}$ can be omitted when there is no risk of confusion). Then
$\JJJ_1$ and $\JJJ_{\mathrm a}$ are complementary in the sense that $\JJJ_1+\JJJ_{\mathrm a}=\JJJ$ and $\JJJ_1\cap \JJJ_{\mathrm a}$ is finite. As the action of $\gamma$ restricted to $\JJJ_{\mathrm a}$ is elliptic, we can determine the number of fixed points $\#\JJJ^\gamma_{\mathrm a}$ for this action as we did before, for example by computing the determinant of $(\gamma-\id_{\CC^3})|_{V_{\mathrm a}}$. Then we have for the group of connected components of $\JJJ^\gamma$:
$$
\JJJ^\gamma/\JJJ_1\simeq \JJJ^\gamma_{\mathrm a}/(\JJJ^\gamma_{\mathrm a}\cap \JJJ_1).
$$
Hence to know the number of components, we have to determine $\#(\JJJ^\gamma_{\mathrm a}\cap \JJJ_1)$, that is, the number of points of $\JJJ^\gamma_{\mathrm a}$ whose representatives in $\CC^3$ are zero modulo $V_1+\Lambda$.

 For reflections the eigenspace $V_1$ is a plane, in which case we call it the mirror. For the remaining non-trivial parabolic elements $\gamma$, $V_1$ is 1-dimensional, and we call it the axis of $\gamma$. For both reflections and antireflections, we have $V_{\mathrm a}=V_{-1}$.

\begin{prop}\label{parabolic}
$G$ has the following cyclic subgroups of order $>1$ generated by parabolic elements:
\begin{enumerate}[i)]
\item One conjugacy class of $21$ subgroups of order $2$ generated by reflections; the fixed locus in $\JJJ$ of each of them is the abelian surface, the image of the mirror of the reflection.
\item One conjugacy class of $21$ subgroups of order $2$ generated by antireflections; the fixed locus in $\JJJ$ of each of them is the union of $4$ translates of the elliptic curve $\JJJ_1$ in $\JJJ$, the image of the axis of the antireflection.
\item One conjugacy class of $28$ subgroups of order $3$; the fixed locus in $\JJJ$ of each of them is the elliptic curve  $\JJJ_1$, the image of the axis of the generator.
\item One conjugacy class of $21$ cyclic subgroups of order $4$; the fixed locus in $\JJJ$ of each of them is the elliptic curve  $\JJJ_1$, the image of the axis of the generator.
\end{enumerate}
\end{prop}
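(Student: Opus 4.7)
The plan is to enumerate the conjugacy classes of cyclic subgroups generated by parabolic elements of $G$, pick a representative $\gamma$ for each class, and apply the general method sketched before the proposition. Using $G=\{\pm 1\}\times H$ together with the conjugacy class data for $H$, the parabolic elements split into four families: the $21$ reflections $r_e\in -H$ (eigenvalues $1,1,-1$); the $21$ antireflections $\rho_e=-r_e\in H$ (eigenvalues $1,-1,-1$); the $56$ elements of order $3$ in $H$, grouping into $28$ cyclic subgroups $C_3$ (matching row~13 of the subgroup table); and the $42$ elements of order $4$ in $H$ with eigenvalues $1,i,-i$, grouping into $21$ cyclic subgroups $C_4$. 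The bijection between the subgroups $C_4$ and the antireflections is given by $\langle\gamma\rangle\mapsto\langle\gamma^2\rangle$, since squaring an order-$4$ element with eigenvalues $(1,i,-i)$ produces an antireflection with eigenvalues $(1,-1,-1)$.

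For each representative $\gamma$ I would decompose $V=V_1^{(\gamma)}\oplus V_{\mathrm a}^{(\gamma)}$ orthogonally, set $\Lambda_1=\Lambda\cap V_1$ and $\Lambda_{\mathrm a}=\Lambda\cap V_{\mathrm a}$, and identify the connected component of $0$ in $\Fix_\JJJ(\gamma)$ as the abelian variety $\JJJ_1=V_1/\Lambda_1$ of complex dimension $\dim_\CC V_1$. For a reflection, $V_1=e^\perp$ is a complex $2$-plane, giving the abelian surface image of the mirror; for the other three cases, $V_1$ is the $1$-dimensional axis ($\CC e$ in (ii) and (iv); $\CC(1,1,1)$ for a cyclic permutation representative of (iii)), giving an elliptic curve. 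The number of connected components equals
\[
\bigl|\det\bigl((\gamma-\id)|_{V_{\mathrm a}}\bigr)\bigr|^2 \,\Big/\, \bigl[\pi_{\mathrm a}(\Lambda):\Lambda_{\mathrm a}\bigr],
\]
where $\pi_{\mathrm a}\colon V\to V_{\mathrm a}$ is the projection parallel to $V_1$: any fixed-point representative $z\in\CC^3$ of $\gamma$ modulo $\Lambda$ satisfies $(\gamma-\id)z\in\Lambda$, so $\pi_{\mathrm a}(z)\in(\gamma-\id)^{-1}(\Lambda_{\mathrm a})$, and two representatives lie in the same component iff their $\pi_{\mathrm a}$-projections agree modulo $\pi_{\mathrm a}(\Lambda)$.

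The main technical obstacle is the index computation $[\pi_{\mathrm a}(\Lambda):\Lambda_{\mathrm a}]=[\Lambda:\Lambda_1\oplus\Lambda_{\mathrm a}]$, handled case by case using the $\OOO$-module description of $\Lambda$. For cases (i), (ii), (iv) I would pick the common axis root $e=e_2=(0,0,2)$; the Chinese Remainder Theorem over the coprime ideals $(\alpha)$ and $(\bar\alpha)$ of $\OOO$ (with product $(\alpha\bar\alpha)=(2)$) then identifies $\pi_{\mathrm a}(\Lambda)/\Lambda_{\mathrm a}\simeq \OOO/2\OOO$ of order $4$. Combined with $|\det((r_e-\id)|_{V_{\mathrm a}})|^2=4$, $|\det((\rho_e-\id)|_{V_{\mathrm a}})|^2=16$, and $|\det((\gamma_4-\id)|_{V_{\mathrm a}})|^2=|(i-1)(-i-1)|^2=4$, this gives respectively $1$, $4$ and $1$ connected components. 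For case (iii), taking $\gamma\colon (z_1,z_2,z_3)\mapsto (z_2,z_3,z_1)$, the linear form $\sigma(z):=z_1+z_2+z_3$ satisfies $\sigma(\Lambda)=\bar\alpha\OOO$ and $\sigma(\Lambda_1\oplus\Lambda_{\mathrm a})=3\bar\alpha\OOO$; since $3$ is inert in $\OOO$, this yields $[\Lambda:\Lambda_1\oplus\Lambda_{\mathrm a}]=|\OOO/3\OOO|=9$, matched against $|\det((\gamma-\id)|_{V_{\mathrm a}})|^2=|(1-\omega)(1-\omega^{-1})|^2=9$, giving a single component. The hardest part is organizing these index calculations uniformly; conceptually they all reduce to exploiting the splitting of $2$ and the inertness of $3$ in $\OOO=\ZZ[\alpha]$.
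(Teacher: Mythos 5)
Your proposal is correct and rests on the same framework the paper sets up just before the proposition: decompose $V=V_1\oplus V_{\mathrm a}$, identify the component of $0$ in $\Fix_\JJJ(\gamma)$ with $V_1/\Lambda_1$, and count components via $\JJJ^\gamma_{\mathrm a}$ modulo $\JJJ_1$. Your enumeration of the four conjugacy classes of parabolic cyclic subgroups, the identity $[\pi_{\mathrm a}(\Lambda):\Lambda_{\mathrm a}]=[\Lambda:\Lambda_1\oplus\Lambda_{\mathrm a}]$, and the resulting component counts $1,4,1,1$ all check out (e.g.\ for $e=e_2$ one indeed gets $\Lambda/(\Lambda_1\oplus\Lambda_{\mathrm a})\simeq\OOO/2\OOO$, and $3$ is inert since $x^2-x+2$ is irreducible mod $3$). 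Where you differ is in the execution of the counting step: the paper verifies connectivity by writing down explicit fixed-point representatives on $\JJJ_{\mathrm a}$ (the $16$ half-period combinations in case ii), the nine $\theta_{ij}$ in case iii), the four points in case iv)) and checking by hand which ones are congruent modulo $V_1+\Lambda$, whereas you replace all of this with the uniform index formula $|\det((\gamma-\id)|_{V_{\mathrm a}})|^2/[\pi_{\mathrm a}(\Lambda):\Lambda_{\mathrm a}]$ evaluated through the arithmetic of $\OOO=\ZZ[\alpha]$. Your version is more systematic and makes it transparent why three of the four cases give a connected fixed locus (the elliptic count exactly cancels the lattice index), at the cost of having to justify the index identities; the paper's version is more pedestrian but also hands you the explicit translates $\kappa_i+\JJJ_1^{(\rho)}$ that are needed later in Proposition \ref{spe-para}, which your argument would still have to produce separately for case ii).
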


\begin{proof} 

i) As the reflections form one conjugacy class, it suffices to compute the fixed locus just for one of them; choose 
$r_2:(z_1,z_2,z_3)\mapsto(z_1,z_2,-z_3)$.
For $z=(z_1,z_2,z_3)\in\CC^3$, the point $z+\Lambda\in \JJJ=\CC^3/\Lambda$ is fixed under $r_2$
if and only if $r_2(z)-z=-z_3(0,0,2)\in\Lambda$, which is equivalent to $z_3\in{\pazocal O}$. Then there exists
$v\in V_1=\ker (r_2-\id_{\CC^3})$ such that $z=v+z_3(\bar\alpha,1,1)$, hence $z\equiv v\mod\Lambda$ and thus
$z$ represents the point $v+\Lambda$ of the abelian surface $\JJJ_1=V_1/(V_1\cap\Lambda)$, the image of the mirror $V_1$ in $\JJJ$. We see that the restricted action on $\JJJ_{\mathrm a}=\JJJ_{-1}$ is by multiplication by $-1$, so the fixed locus $\JJJ^{r_2}_{\mathrm a}$ consists of 4 points, images of the half-periods of $\Lambda_{-1}$, but all the 4 fixed  points are contained in $\JJJ_1$, so $\JJJ^{r_2}_{\mathrm a}/(\JJJ^{r_2}_{\mathrm a}\cap \JJJ_1)$ is trivial.

ii) Compute the fixed locus of $\rho_2=-r_2$. Here $V_1$ is the $z_3$-axis. For $z=(z_1,z_2,z_3)\in\CC^3$, the point $z+\Lambda\in \JJJ=\CC^3/\Lambda$ is fixed under $\rho_2$
if and only if $\rho_2(z)-z=(-2z_1,-2z_2,0)\in\Lambda$, which is equivalent to 
$$
(z_1,z_2,0)\in\tfrac12\Lambda_{\mathrm a}, \ \mbox{where}\ \Lambda_{\mathrm a}:= \Lambda\cap \{ z_3=0\}={\pazocal O}(2,0,0)+
{\pazocal O}(\alpha,\alpha,0).
$$
The latter condition means that $(z_1,z_2,0)$, modulo $\Lambda_{\mathrm a}$, is one of the 16 linear combinations of the 
vectors
$$
(1,0,0),\ (\bar\alpha,0,0),(\tfrac\alpha2,\tfrac\alpha2,0), (1,1,0)
$$
with coefficients from $\{0,1\}$. As $(1,1,0)\equiv (1,1,\bar\alpha)\mod V_1$ and $(1,1,0)+(1,0,0)+(\bar\alpha,0,0)
\equiv (\bar\alpha,1,1)+(2,0,0)\mod V_1$, we see that only four of the 16 linear combinations are distinct modulo $V_1+\Lambda$, which implies the conclusion.

iii) We will determine the fixed locus of the order-3 element $c^4=-c$, where $c$ is the order-$6$ element from Prop. \ref{elliptic} iii).
For $z\in\CC^3$ the property of being a fixed point of the order-3 element $-c$ modulo $\Lambda$ can be given the following characterization:
$$
(z_3-z_1,z_1-z_2,z_2-z_3)\in\Lambda_{\mathrm a}=\Lambda\cap\{z_1+z_2+z_3=0\}={\pazocal O}(\alpha,0,-\alpha)+{\pazocal O}(0,\alpha,-\alpha).
$$
Looking at the induced action on the abelian surface $\JJJ_{\mathrm a}$, we easily find 9 fixed points, whose representatives modulo $\Lambda_{\mathrm a}$ can be given by
$$
\theta_{ij}=\tfrac i3(-\alpha,-\alpha,2\alpha)+\tfrac j3 (-2,-2,4), \ i,j=0,1,2
$$
The existence of exactly nine fixed points for the induced action on $\JJJ_{\mathrm a}$ can be confirmed by the calculation of the determinant of $(-c-\id)|_{V_{\mathrm a}}$. Now we easily see that the $\theta_{ij}$ are $0$ modulo $V_1+\Lambda$, for example, 
$\theta_{1,0}=\tfrac 13(-\alpha,-\alpha,2\alpha)=-(\alpha,\alpha,0)+\frac23(\alpha,\alpha,\alpha)$, where
$-(\alpha,\alpha,0)\in\Lambda$ and $\frac23(\alpha,\alpha,\alpha)\in V_1$. Hence the images of $\theta_{ij}+V_1$ in $\JJJ$ are one and the same elliptic curve passing through zero.

iv) We will determine the fixed locus of the order-4 parabolic element $h_4=-h'_4$, where $h'_4$ was defined in Prop. \ref{elliptic} ii): $h_4:(z_1,z_2,z_3)\mapsto (z_1,-z_3,z_2)$. Here $V_1$ is the $z_1$-axis. A point $z\in\CC^3$ is fixed under $h_4$ modulo $\Lambda$ if and only if
$$
(0,z_2+z_3,z_3-z_2)\in \Lambda_{\mathrm a}=\Lambda\cap\{z_1=0\}={\pazocal O}(0,2,0)+{\pazocal O}(0,\alpha,\alpha).
$$
There are 4 solutions modulo $\Lambda_{\mathrm a}$: $0$, $(0,1,1)$, $(0,0,\alpha)$, and $(0,1,1+\alpha)$. All of them are in $\Lambda+V_1$, for example, $(0,1,1)=(\bar\alpha,1,1)+(-\bar\alpha,0,0)$ with $(\bar\alpha,1,1)\in\Lambda$,
$(-\bar\alpha,0,0)\in V_1$. Hence the fixed locus of $h_4$ is connected.
\end{proof}

\section{Orbits with elliptic stabilizers}

We want to enumerate all the possible stabilizers $G_u=\Stab_G(u)$ and $H_u=\Stab_{H}(u)$ of points $u\in \JJJ$. In this section we will consider the points $u$ fixed by at least one elliptic element of $G$. Such points and their stabilizers will be called {\em elliptic}. In the case when the stabilizer $G_u$ is non-trivial but contains no elliptic elements, we will call $u$ and its stabilizer {\em parabolic}. The parabolic points will be studied in the next section.

The knowledge of the stabilizer provides the length of the orbit of $u$, which is the index of the stabilizer, and determines the singularities of the quotient varieties at $G\cdot u$ and ${H}\cdot u$, the orbits of $u$ viewed as points of the respective quotients that are the images of $u$. The image of $u$ is a nonsingular point of the quotient if and only if the stabilizer is generated by reflections, otherwise it is a singularity, locally analytically equivalent to the linear quotient $\CC^3/G_u$, resp. $\CC^3/H_u$. 

The points of a Zariski open set of $\JJJ$ have trivial stabilizer in $G$ or ${H}$; we call this Zariski open set the {\em free locus} of $G$, resp. ${H}$.

The non-free locus of $G$ is the union of two-dimensional images of mirrors of reflections, of a number of curves and of a number of isolated points. The union of images of mirrors will be called the {\em discriminant arrangement} in $\JJJ$. By Prop. \ref{parabolic} i), the discriminant arrangement is the union of 21 abelian surfaces passing through zero, which we will also call, by abuse of language, {\em mirrors} or {\em mirror abelian surfaces}.
A generic point of a mirror abelian surface which is the image of the mirror of a reflection $r$ has minimal stabilizer, equal to
$\langle r\rangle$.
 The stabilizer can jump along some curves, called  {\em special curves}. The special curves that belong to the discriminant arrangement are the intersection curves of two or more mirrors. Such curves are called  {\em special discriminant curves}.
 
The points of a special curve with stabilizer bigger than that of the generic point of the curve will be called {\em dissident points} of the special curve. 
The curve components of the non-free locus will be called {\em off-discriminant special curves}, and the points of the zero-dimensional irreducible components of the non-free locus will be called {\em isolated special points}.

We will also distinguish the points $u$ of the non-free locus in $\JJJ$ according to the property whether their stabilizer $G_u$ in $G$ is cyclic or not; we will say that $u$ is a {\em cyclic point} if $G_u$ is a cyclic subgroup of $G$. 
The most special point is $0\in \JJJ$; it is stabilized by the whole of $G$ and is a smooth point of $X=\JJJ/G$, as $G$ is generated by reflections.  

Now we are turning to the locus of elliptic points. It turns out that the only {\em isolated} special points are the elliptic cyclic points fixed by elements of order 7. They are treated in the next Proposition; we determined six of them in Prop. \ref{elliptic} iv), and all of them belong to the orbits of these six. 

We denote by $C_d$ a cyclic group of order $d$, and by $\frac1d(\nu_1,\nu_2,\nu_3)$ the (analytic equivalence class of the) cyclic quotient singularity $\CC^3/C_d$, where
the generator $c_d$ of $C_d$ acts by 
$c_d:(z_1,z_2,z_3)\mapsto (\epsilon^{\nu_1}z_1,\epsilon^{\nu_2}z_2,\epsilon^{\nu_3}z_3)$,\ \  $\epsilon=\exp\big(\frac{2\pi i}d\big)$.

\begin{proposition} \label{T7}
Let $T_7$ denote the set of $48$ non-zero points of $\JJJ$ fixed by elements of order $7$.
\begin{enumerate}[i)]
\item Suppose $\eta\in T_7$ is fixed under the action of an element $\sigma\in {H_{168}}$ of order $7$. Then
$\Stab_{H_{168}}(\eta)=\langle \sigma\rangle$ is of order $7$, so $T_7$ is the union of two ${H_{168}}$-orbits of length $24$.
\item In the notation of i), the normalizer $N_{H_{168}}(\langle \sigma\rangle)\simeq G_{21}$, where $G_{21}$ is the group of order $21$ introduced in \eqref{ord21}, and there exists an element $\tau$ of order $3$ in $N_{H_{168}}(\langle \sigma\rangle)$ such that
$\tau(\eta)=2\eta$, hence $\eta, \ 2\eta, \ 4\eta$ belong to one of the two ${H_{168}}$-orbits in $T_7$, while $3\eta,\ 5\eta,\ 6\eta$ belong to the other. As representatives of the two orbits, one can choose $\eta_1$ and $\eta_3$, where
$\eta_i$ ($i=1,\ldots,6$) were introduces in Prop. \ref{elliptic} iv), and we denote by the same symbol $\eta_i$ the fixed points of ${g_7}$ on $\JJJ$ represented by the vectors $\eta_i\in\CC^3$.
\item The images in $Y=\JJJ/{H_{168}}$ of the $2$ ${H_{168}}$-orbits in $T_7$ are $2$ isolated cyclic quotient singularities of $Y$ of local analytic type $\frac17(1,2,4)$.
\item The action of $-1$ permutes the two ${H_{168}}$-orbits, hence $T_7$ is just one $G$-orbit, whose image in the quotient $X=\JJJ/G$ is an isolated singularity of local analytic type $\frac17(1,2,4)$.
\end{enumerate}
\end{proposition}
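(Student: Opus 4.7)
The plan is to first establish $|T_7|=48$, then to prove (i) modulo (ii), then to prove (ii) itself (where the main effort lies), and finally to read off (iii) and (iv) from the linear representation and from the action of $-1$.

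For the counting: by Prop.~\ref{elliptic}(iv) each of the 8 Sylow-$7$ subgroups of $H_{168}$ contributes 7 fixed points in $\JJJ$, one of which is the origin. No non-zero $\eta\in\JJJ$ can be fixed by two distinct Sylow-$7$ subgroups, for then $\Stab_{H_{168}}(\eta)$ would properly contain a Sylow-$7$; but the subgroup lattice of $H_{168}$ recorded in the previous section lists only $G_{21}$ (itself containing a unique Sylow-$7$) and $H_{168}$ as proper overgroups of a Sylow-$7$, so the only possibility is $\Stab=H_{168}$, contradicting $\eta\neq 0=\Fix_\JJJ(H_{168})$. This yields $|T_7|=8\cdot 6=48$, and the same lattice argument shows $\Stab_{H_{168}}(\eta)\in\{\langle\sigma\rangle,G_{21}\}$; granting (ii), which shows $\tau\eta=2\eta\neq\eta$, we get $\Stab_{H_{168}}(\eta)=\langle\sigma\rangle$, orbits of length $24$, and two $H_{168}$-orbits, proving (i).

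For (ii), the identification $N_{H_{168}}(\langle\sigma\rangle)\simeq G_{21}$ follows from the Sylow count ($n_7=8$ forces normalizer order $168/8=21$). Writing $G_{21}=\langle\sigma,\tau\mid\sigma^7=\tau^3=1,\ \tau\sigma\tau^{-1}=\sigma^a\rangle$, non-abelianness forces $a\in\{2,4\}$. Since $\tau$ normalizes $\langle\sigma\rangle$, it preserves the finite subgroup $\Fix_\JJJ(\sigma)\simeq\ZZ/7$ of $\JJJ$ and acts on it by multiplication by some $k$ with $k^3\equiv 1\pmod 7$, so $k\in\{1,2,4\}$. The crucial step is $k\neq 1$, and I would verify it by the following direct computation: take $\tau=h_3=\rho_1\rho_3\rho_1\rho_2$ from \eqref{ord21}, express it as an integer matrix in the $\ZZ$-basis $(\epsilon_i)$ of $\Lambda$, and check using the formula for $\eta_1$ in Prop.~\ref{elliptic}(iv) that $(\tau-\id)\eta_1\notin\Lambda$; equivalently, $\eta_1$ does not lie on the (unique, connected) elliptic curve $\Fix_\JJJ(\tau)$ provided by Prop.~\ref{parabolic}(iii). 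Once $k\in\{2,4\}$, replacing $\tau$ by $\tau^2$ if necessary gives $k=2$, hence $\tau\eta_1=2\eta_1=\eta_2$, with $\langle\tau\rangle$-orbits $\{\eta_1,\eta_2,\eta_4\}$ and $\{\eta_3,\eta_5,\eta_6\}$. These lie in distinct $H_{168}$-orbits: a transporter $g\in H_{168}$ with $g\eta_1=\eta_3$ satisfies $g^{-1}\sigma g\in\Stab(\eta_1)=\langle\sigma\rangle$, forcing $g\in N_{H_{168}}(\langle\sigma\rangle)=G_{21}$, but $G_{21}\cdot\eta_1=\{\eta_1,\eta_2,\eta_4\}\not\ni\eta_3$.

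Parts (iii) and (iv) then follow quickly. For (iii), $\langle\sigma\rangle$ acts linearly on $T_\eta\JJJ\simeq\CC^3$ exactly as on $V$, with eigenvalues $\zeta,\zeta^2,\zeta^4$ from the Klein-curve coordinates recalled earlier, so each orbit-image in $Y=\JJJ/H_{168}$ is a cyclic quotient singularity of type $\tfrac17(1,2,4)$; these are isolated because $\langle\sigma\rangle$ has prime order, so every nearby point has trivial stabilizer. For (iv), $-\eta_i\equiv\eta_{-i\bmod 7}$, so multiplication by $-1\in G\setminus H_{168}$ sends $\{\eta_1,\eta_2,\eta_4\}$ onto $\{\eta_6,\eta_5,\eta_3\}$, swapping the two $H_{168}$-orbits and merging them into a single $G$-orbit of length $48$. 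Orbit-stabilizer then gives $|\Stab_G(\eta_1)|=336/48=7$, hence $\Stab_G(\eta_1)=\langle\sigma\rangle$, so the image in $X=\JJJ/G$ is again an isolated $\tfrac17(1,2,4)$-singularity. The main obstacle is precisely the verification of $k\neq 1$ in (ii): orbit-counting alone does not distinguish $k=1$ from $k\in\{2,4\}$, since $k=1$ would yield six $H_{168}$-orbits of length $8$ with stabilizer $G_{21}$, still consistent with $|T_7|=48$. A concrete linear-algebra computation in coordinates, either of $\tau$ acting on $\Lambda$ reduced modulo $7$ (and reading off the scalar action on the 1-dimensional kernel of $\sigma-\id$) or of $\eta_1$ against the axis of $\tau$, appears unavoidable.
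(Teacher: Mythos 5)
Your proof is correct and follows essentially the same route as the paper's: reduce by conjugacy to $g_7$, identify the normalizer $G_{21}$ via the Sylow count, and settle everything by the explicit verification that $h_3$ acts on $\Fix_\JJJ(g_7)$ as multiplication by $2$ (the paper checks $h_3(\eta_1)-2\eta_1\in\Lambda$ directly, which is exactly the computation you flag as unavoidable). You merely supply more detail than the paper on the count $|T_7|=48$, the transporter argument separating the two orbits, and the eigenvalues $\zeta,\zeta^2,\zeta^4$ giving the type $\frac17(1,2,4)$, all of which the paper compresses into ``the remaining assertions easily follow.''
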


\begin{proof}
For any element $\sigma$ of order 7, $\Fix_\JJJ(\sigma)$ is the same as $\Fix_\JJJ(\langle\sigma\rangle)$. As there is only one conjugacy class of subgroups of order 7 in ${H_{168}}$, we can restrict ourselves to one particular element of order 7, say the element ${g_7}$ introduced in \eqref{ord21}. So we assume $\sigma={g_7}$. As we know that ${H_{168}}$ has eight 7-Sylow subgroups, the order of the normalizer of $\langle {g_7}\rangle$ is 21. The formula \eqref{ord21} presents a subgroup of order 21 normalizing $\langle {g_7}\rangle$, which is $G_{21}$, hence $N_{H_{168}}(\langle {g_7}\rangle)= G_{21}$. The order-3 element ${h_3}$ from  \eqref{ord21} normalizes $\langle {g_7}\rangle$, hence leaves invariant $\Fix_\JJJ(\langle {g_7}\rangle)$. By a direct calculation we check that ${h_3}$ doubles each fixed point of $\langle {g_7}\rangle$. Indeed,
expressing $\eta_1$ in coordinates of $\CC^3$, we obtain 
$$
\eta_1=\begin{pmatrix}
      \frac{i\sqrt7}7\\
      \frac{7+i\sqrt7}{14}\\
      1-\frac{2i\sqrt7}7\end{pmatrix},\ 
{h_3}=\frac12 \begin{pmatrix}
      1&{-\alpha}&1\\
      -\bar\alpha&0&-\bar\alpha\\
      {-1}&{-\alpha}&{-1}\end{pmatrix}, \ 
{h_3}(\eta_1)-2\eta_1=
      \begin{pmatrix}
      1-\alpha\\
      -1-\alpha\\
      -3+\alpha\end{pmatrix}\in\Lambda.
$$
As $\eta_i=i\eta_1$ for all $i=1,\ldots,6$, we deduce that the $\langle {h_3}\rangle$-orbit of $\eta_1$ consists of the three points $\eta_1,\eta_2,\eta_4$. This implies i) and ii), and the remaining assertions easily follow.
\end{proof}

We will now compute the stabilizers of the remaining points from fixed loci of elliptic elements. The next proposition uses the notation of Proposition \ref{elliptic}. 

\begin{prop}\label{dissidents}
\begin{enumerate}[i)]
\item The locus $T_6$ of non-zero fixed points of elements of order $6$ in $G$ is the union of orbits of the $3$ fixed points $\omega_{ij}\ ((i,j)\neq(0,0))$ of the order-$6$ element $c=(z_1,z_2,z_3)\mapsto (-z_3,-z_1,-z_2)$. We have: $\Stab_{H_{168}}(\omega_{10})\simeq
\Stab_{H_{168}}(\omega_{11})\simeq S_4$, \ $\Stab_{H_{168}}(\omega_{01})=\Stab_{H_{168}}(\omega_{10})\cap
\Stab_{H_{168}}(\omega_{11})\simeq S_3$. The three points are contained in $\JJJ^{\langle-1\rangle}$ and
are special on the off-discriminant special curve $\JJJ_1^{(-c)}$, the image of
the axis $z_1=z_2=z_3$ of $-c:(z_1,z_2,z_3)\mapsto (z_3,z_1,z_2)$. Moreover, $\omega_{10}$ and $\omega_{11}$ are
quadruple points of the configuration of special curves, as they  each belong to and are dissident on three special discriminant curves which are fixed by the order-$4$ elements in their stabilizers. Say, for $\omega_{10}$, the stabilizer is nothing else but the monomial subgroup \eqref{monomial}, the three axes of its $6$ order-$4$ elements are just the coordinate axes of $\CC^3$, and the three extra special curves passing through $\omega_{10}$ are the images of the coordinate axes.
The stabilizers in $G$ are twice bigger, $\Stab_G(\omega_{ij})=\pm\Stab_{H_{168}}(\omega_{ij}):=
\{\pm1\}\times\Stab_{H_{168}}(\omega_{ij})$, and they are generated by reflections, so that the images of $\omega_{ij}$ in
$X$ are smooth points.
\item The locus $T'_4$ of non-zero fixed points of elements of order $4$ with determinant $-1$ is the union of the orbits of the $15$ fixed points $\beta_i\ (i\neq 0)$ of the order-$4$ element $h'_4$. In the notation of $\beta_i$ we will understand $i$ as a binary multiindex $\iota_0\iota_1\iota_2\iota_3$ varying from $0000$ to $1111$. The next table lists the stabilizers of $\beta_i$ (except for $\beta_{0000}=0$), up to isomorphism, and the singularities at the images of the corresponding points $\beta_i$ in $X$. We mark by the plus sign the $\beta_i$ that are fixed by $-1$; the numbers between brackets in the last line indicate the number of images  in $X$ of the points $\beta_i$ from the current column; $D_8$, $D'_8$ denote dihedral groups of order 8, the first of which is a subgroup of ${H_{168}}$, the second is not; similarly for the pair $S_4,S'_4$.\smallskip

\begin{center}
\begin{tabular}{|c|c|c|c|c|c|}
\hline
\ $\Stab_{H_{168}}(\beta_i)$\ \  & $S_4$ &$ A_4$  & $D_8$ & $C_2\times C_2 $& $C_2$ \T\\
\hline
\ $\Stab_G(\beta_i)$\ \  & $\pm S_4$ &$ S'_4$\T  & $\pm D_8$ & $D'_8$ & $C_4$ \\
\hline
$\beta_i$  
 &\shortstack{$\beta_{0100}(+)$ \\ $\beta_{1100}(+)$}
      &\T\TopStrut{10}\B\shortstack{$\beta_{0001}$\\ $\beta_{0010}$ \\ $\beta_{0110}$\\ $\beta_{1101}$}&$\beta_{1000}(+)$&
      \shortstack{$\beta_{1010}$\\ $\beta_{0101}$\\ $\beta_{1001}$\\ $\beta_{1110}$}&\shortstack{$\beta_{0011}$\\
      $\beta_{1011}$\\ $\beta_{0111}$\\ $\beta_{1111}$} \\
\hline
 {\rm Image in $X$}& {\rm smooth [2]}& {\rm smooth [2]}& {\rm smooth [1]}&{\rm smooth [2]}&${\frac14(1,2,3)\ [1]}$\B\T\\
\hline
\end{tabular}
\end{center}
\smallskip
All the $G$-stabilizers except for $C_4$ are generated by reflections and the corresponding points $\beta_i$ are mapped to smooth points of $X=\JJJ/G$. 
The image in $X$ of the points $\beta_i$ with stabilizer $C_4$ is a non-isolated cyclic quotient singularity of analytic type $\CC^3/C_4$, where $C_4$ acts with weights $1,2,3$.
\item The locus $T_2$ of $63$ points fixed by the action of $-1$ on $\JJJ\setminus\{0\}$ decomposes into the following $G$-orbits:
\begin{itemize}
\item the two orbits of the points $\beta_{0100}, \beta_{1100}$ from ii) (or of $\omega_{10}$, $\omega_{11}$ from i)) with $G$-stabilizers $\pm S_4$, of length $7$ each;
\item the orbit of the point $\beta_{1000}$ with $G$-stabilizer $\pm D_8$ of length $21$;
\item the orbit of the point $\omega_{01}$ from i) with $G$-stabiliser $\pm S_3$ of length $28$.
\end{itemize}
These stabilizers are generated by reflections, so the image of $T_2$ in $X$ consists of $4$ smooth points.
\end{enumerate}
\end{prop}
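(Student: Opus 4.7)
The plan is uniform across the three parts: for each representative point $u$ from Proposition~\ref{elliptic}, compute $\Stab_H(u)$ by testing group elements of $H$ against the condition $\sigma u - u\in \Lambda$, determine $\Stab_G(u)$ from whether $u$ is $2$-torsion (so that $-I\in\Stab_G(u)$), use orbit--stabilizer to extract orbit lengths, and decide smoothness of the image in $X$ by checking generation by reflections. The dissident structure (special curves meeting at $u$) is read off from the parabolic elements contained in the stabilizer.

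For part~(i), the three non-zero $c$-fixed points $\omega_{10},\omega_{01},\omega_{11}$ are all $2$-torsion, so $\Stab_G(\omega_{ij})=\pm\Stab_H(\omega_{ij})$. Each stabilizer in $H$ contains $\langle c^2\rangle\simeq C_3$ (the part of $\langle c\rangle$ landing in $H$, since $c\notin H$). The subgroup lattice of $H$ restricts the possible overgroups, and balancing the incidence count $\sum_{\omega\in T_6}\#\{C_6\subset\Stab_G(\omega)\}=3\cdot 28=84$ against candidate orbit lengths forces $\Stab_H(\omega_{10})\simeq \Stab_H(\omega_{11})\simeq S_4$ (order $24$, orbit length $7$) and $\Stab_H(\omega_{01})\simeq S_3$ (order $6$, orbit length $28$). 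The relation $\omega_{10}+\omega_{11}\equiv\omega_{01}\pmod\Lambda$, verified directly from the formulae, yields the inclusion $\Stab_H(\omega_{10})\cap\Stab_H(\omega_{11})\subseteq\Stab_H(\omega_{01})$, and equality follows by order comparison. The three axes of the order-$4$ elements in $\Stab_H(\omega_{10})\simeq S_4$ are identified, via a realization of this $S_4$ as (a conjugate of) the monomial subgroup~\eqref{monomial}, with the three coordinate axes of $\CC^3$; these project to three special discriminant curves through $\omega_{10}$. Every involution in $S_4$ (respectively $S_3$) becomes a reflection in $G$ after multiplication by $-I$, so $\pm S_4$ and $\pm S_3$ are reflection-generated and the images of $\omega_{ij}$ in $X$ are smooth.

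For part~(ii), the approach is more laborious: for each of the $15$ non-zero $\beta_i$, determine $\Stab_H(\beta_i)$ by direct computation. All contain the involution $(h'_4)^2=\diag(1,-1,-1)\in H$ (since $h'_4$ fixes each $\beta_i$, so does its square), hence $C_2\subseteq\Stab_H(\beta_i)$; the overgroups of $C_2$ in $H$ give the five types $S_4,A_4,D_8,C_2\times C_2,C_2$ appearing in the table. Sort the $\beta_i$ by stabilizer type via direct verification against representatives of each candidate subgroup. For each $\beta_i$, $\Stab_G(\beta_i)$ is an index-$2$ extension of $\Stab_H(\beta_i)$ by a coset containing $h'_4\in G\setminus H$: when $\beta_i$ is $2$-torsion the extension is $\pm\Stab_H(\beta_i)$, otherwise it yields $S'_4$, $D'_8$, or $C_4=\langle h'_4\rangle$ as asserted. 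Only the last fails to be generated by reflections. Its singularity is computed from the eigenvalues of $h'_4:(z_1,z_2,z_3)\mapsto(-z_1,z_3,-z_2)$, namely $-1=i^2$ on the $z_1$-axis and $\pm i=i^{\pm 1}$ on the $(z_2,z_3)$-plane, giving characters $(2,1,3)\bmod 4$ and hence type $\frac14(1,2,3)$; non-isolation follows because the same stabilizer $\langle h'_4\rangle$ persists at every point of the image elliptic curve $\JJJ_1^{(h'_4)}$ (Prop.~\ref{parabolic} iv).

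For part~(iii), assemble the $G$-orbits in $T_2$ from the stabilizers of parts~(i) and~(ii): $\omega_{10}$ and $\omega_{11}$ give two orbits of length $7$ each (stabilizer $\pm S_4$, order $48$); $\omega_{01}$ gives an orbit of length $28$ (stabilizer $\pm S_3$, order $12$); and the $2$-torsion point $\beta_{1000}$ with stabilizer $\pm D_8$ of order $16$ gives an orbit of length $21$. The total $7+7+21+28=63$ exhausts $T_2$. Each stabilizer is of the form $\pm K$ with $K\subset H$ containing an involution, so the antireflection $-(\text{involution})\in G$ is a reflection, and these stabilizers are reflection-generated; hence the four images in $X$ are smooth. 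The chief obstacle is the bookkeeping in part~(ii), where identifying the stabilizer type for each of the fifteen $\beta_i$ and distinguishing the two possible conjugacy classes of the $G$-extension (for example $\pm S_4$ versus $S'_4$) requires a careful but routine computation in the quadratic integer ring $\ZZ[\alpha]$.
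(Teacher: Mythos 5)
Your backbone---testing $\sigma u-u\in\Lambda$ for each representative $u$ and each $\sigma$, then reading off orbit lengths and reflection-generation---is essentially what the paper does: its proof reduces to one conjugacy class of order-$6$ elements, delegates the stabilizer enumeration to Macaulay2, and adds the observation that $S_3$, $S_4$ are generated by their involutions, which are antireflections of $H_{168}$, so that $\pm K$ is reflection-generated. Two of the shortcuts you substitute for the computation, however, do not work as stated. First, the incidence count in part~(i) does not force the stabilizers. The possible overgroups of $\langle c^2\rangle\simeq C_3$ in $H_{168}$ fixing a nonzero $2$-torsion point are $C_3$, $S_3$, $A_4$, $S_4$; the corresponding orbits contribute $56\cdot1$, $28\cdot1$, $14\cdot4$, $7\cdot4$, i.e.\ $56$, $28$, $56$, $28$, to the total $3\cdot28=84$. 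Several combinations balance ($28+28+28$ with three orbits of type $S_3$ or $S_4$ or mixed, $56+28$ with two orbits, etc.), so the count is only a consistency check and you still must compute $\Stab_{H_{168}}(\omega_{ij})$ directly. Likewise, ``equality by order comparison'' for $\Stab_{H_{168}}(\omega_{10})\cap\Stab_{H_{168}}(\omega_{11})=\Stab_{H_{168}}(\omega_{01})$ requires the lower bound $6$ on the order of the intersection of the two $S_4$'s, which does not come for free from the subgroup lattice.

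Second, and more seriously, your justification of non-isolation in part~(ii) rests on a false statement: $h'_4$ is \emph{elliptic} (eigenvalues $-1,i,-i$, no eigenvalue $1$), so there is no curve $\JJJ_1^{(h'_4)}$; $\Fix_\JJJ(h'_4)$ is exactly the $16$ isolated points $\beta_i$, and the stabilizer $\langle h'_4\rangle$ does not persist along any curve (the curve $\JJJ_1^{(h_4)}$ for the parabolic $h_4=-h'_4$ has generic stabilizer $D'_8$, which is reflection-generated, hence gives smooth image points). The correct reason is that $\beta_{0011}$ is fixed by the antireflection $\rho=(h'_4)^2=\diag(1,-1,-1)$ and lies on the translate $\kappa_3+\JJJ_1^{(\rho)}$ of the axis curve of $\rho$ whose generic point has $G$-stabilizer $\langle\rho\rangle$, not generated by reflections (Proposition~\ref{spe-para}(a)); its image is the curve $\ell$ of $\frac12(1,1,0)$ singularities of Theorem~\ref{SingX}, on which the $\frac14(1,2,3)$ point sits. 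The rest of the proposal (the eigenvalue computation giving $\frac14(1,2,3)$, the use of $(h'_4)^2\in H_{168}$, the assembly of part~(iii) and the reflection-generation of the $\pm K$ stabilizers) is correct, though for $S'_4$ and $D'_8$ in part~(ii) you should also say explicitly why they are generated by reflections, since they are not of the form $\pm K$.
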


\begin{proof}
i) The first assertion follows from the fact that the elements of order 6 form one orbit under conjugation by ${H_{168}}$ (and by $G$).
All the remaining assertions but the last one are proved by a routine verification, which we performed using the computer algebra system Macaulay2 \cite{M2}. For the last assertion, remark that the groups $S_3,S_4$ are generated by their elements of order 2, and all the elements of order 2 in ${H_{168}}$ are antireflections. Hence the stabilizers of $\omega_{ij}$ in ${H_{168}}$ are generated by antireflections. Passing to the stabilizers of  $\omega_{ij}$ in $G$, we extend the stabilizers in ${H_{168}}$ by adding $-1$, and this obviously provides groups generated by reflections.

ii)  As in i), the proof is obtained by a computer-assisted enumeration of the elements of the stabilizers, followed by the inspection of the elements of order 2. 

iii) All the points of $T_2$ belong to orbits already enumerated in i), ii), so iii) is an obvious consequence of i), ii).
\end{proof}

\section{Parabolic orbits and singularities of $\JJJ/G_{336}$}

In the previous section, we enumerated all the {\em elliptic} special points in $\JJJ$. All of them, except for those belonging to the orbit in the last column of the table in Proposition \ref{dissidents} ii),  turn out to be non-cyclic, that is have non-cyclic stabilizer in $G$. Now we will enumerate the parabolic points.

An obvious way to obtain a curve whose generic point is non-cyclic is to take the intersection of two mirror abelian surfaces fixed by reflections. Recall what happens in the case when the two reflections, say $r,r'$, commute: they generate a subgroup $(\ZZ/2\ZZ)^2$, their product $\rho=rr'$ is an anti-reflection, and there is a unique cyclic subgroup of order 4 in $H$ containing $\rho$. This follows from the description of the lattice of subgroups of $H$ in Section 1. Thus the curve which is the intersection of the mirrors of two commuting reflections $r,r'$ can be also characterized as the image $\JJJ_1^{(\rho)}$ in $\JJJ$ of the axis of the antireflection $\rho=rr'$, and the full fixed locus $\JJJ^{\rho}$ of $\rho$ is the union of four translates of the elliptic curve $\JJJ_1^{(\rho)}$ (Proposition \ref{parabolic}, ii)).

We will start by enumerating the parabolic points $u$ with cyclic $H_u$.

\begin{proposition}\label{spe-para}
Let $u\in\JJJ$ be a parabolic point, and assume that $H_u$ is cyclic. Then one of the following three cases is realized:
\begin{enumerate}[(a)]
\item $H_u=\langle\rho\rangle$ is of order $2$. In this case $\rho$ is an anti-reflection and its fixed locus $\JJJ^{\rho}$ is the disjoint union of $4$ translates $\kappa_i+\JJJ_1^{(\rho)}$ ($i=0,1,2,3$) of the elliptic curve $\JJJ_1^{(\rho)}$. The points $\kappa_i$ can be choosen in such a way that the following is true:
$\kappa_0=0$, $\kappa_1$, $\kappa_2$, $\kappa_3=\kappa_1+\kappa_2$ are points of order $2$, and $u$
belongs to one of three curves $\kappa_i+\JJJ_1^{(\rho)}$, $i=1,2,3$.
For generic $u_i\in \kappa_i+\JJJ_1^{(\rho)}$,  $i=1,2$, the $H$-stabilizer $H_{u_i}=\langle\rho\rangle$ is of order $2$, while the $G$-stabilizer $G_{u_i}\simeq (\ZZ/2\ZZ)^2$ is generated by two reflections $r_i,r'_i$ such that $\rho=r_ir'_i$.
For generic $u_3\in \kappa_3+\JJJ_1^{(\rho)}$, the $H$- and $G$-stabilizers coincide: $G_{u_3}=H_{u_3}=\langle\rho\rangle=G_{u_1}\cap G_{u_2}$.
For all the three curves $\kappa_i+\JJJ_1^{(\rho)}$, $i=1,2,3$, the subgroup of $H$ leaving invariant each of them is isomorphic to $D_8$.
\item $u\in \JJJ^{c_3}$ for some element $c_3\in H$ of order $3$, $H_u=\langle c_3\rangle$, and $G_u$ is of type $S'_3$ (a subgroup, isomorphic to $S_3$ and not contained in $H$). The subgroup of $H$ (resp. $G$) leaving invariant $ \JJJ^{c_3}$ is of type $S_3$ (resp. $\pm S_3$).
\item $u\in\JJJ^{c_4}$ for some element $c_4\in H$ of order $4$, $H_u=\langle c_4\rangle$, and $G_u$ is of type $D'_8$. The subgroup of $H$ (resp. $G$) leaving invariant $\JJJ^{c_4}$ is $D_8$ (resp. $\pm D_8$), where we denote, as before, by $D_8$ (resp. $D'_8$) a dihedral subgroup of order $8$ embedded in $H$ (resp. in $G$ in such a way, that the image contains four reflections).

\end{enumerate}
In the cases (b), (c), $G_u$ is generated by reflections and the image of $u$ in $X$ is nonsingular.  In the case (a), the subgroups $G_{u_1}, G_{u_2}$ are generated by reflections and $G_{u_3}$ is not, where $u_i$ denotes a generic point of the curve $\kappa_i+\JJJ_1^{(\rho)}$, so the images of $u_1,u_2$ in $X$ are nonsingular and the image of $u_3$ is a non-isolated singularity of type $\frac12(1,1,0)$.
\end{proposition}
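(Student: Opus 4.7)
The plan is to classify parabolic cyclic subgroups of $H$ using Proposition~\ref{parabolic}, and then identify $G_u$ in each case by extending the relevant normalizer computation from $H$ to $G$. Since reflections have determinant $-1$ and so do not lie in $H$, the only nontrivial cyclic parabolic subgroups of $H$ are $\langle\rho\rangle$ with $\rho$ an antireflection (order 2), $\langle c_3\rangle$ (order 3), and $\langle c_4\rangle$ (order 4); these give cases (a), (b), (c). For cases (b) and (c), the subgroup table of Section~\ref{prelim-on-H168} yields $N_H(\langle c_3\rangle)\simeq S_3$ and $N_H(\langle c_4\rangle)\simeq D_8$. Every involution of $H$ is an antireflection with its own $1$-dimensional axis, and direct inspection of the root system $\pazocal{R}$ shows that no antireflection has axis coinciding with the axis of the standard $c_3$ or $c_4$. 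Hence each of the three (resp. four) involutions $\sigma$ in $N_H(\langle c_k\rangle)\setminus\langle c_k\rangle$ restricts to $-\id$ on the axis of $c_k$; for generic $u$ on that axis $\sigma$ does not fix $u$, confirming $H_u=\langle c_k\rangle$, while $-\sigma\in G\setminus H$ is a reflection (eigenvalues $(-1,1,1)$) that does fix $u$. The subgroup generated by $\langle c_k\rangle$ and these reflections is isomorphic to $S_3$ (resp. $D_8$), not contained in $H$ — hence it is $S'_3$ (resp. $D'_8$) — and equals $G_u$ by an order count inside $N_G(\langle c_k\rangle)=\pm N_H(\langle c_k\rangle)$.

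For case (a), I would first observe that the component $\JJJ_1^{(\rho)}=\kappa_0+\JJJ_1^{(\rho)}$ of $\JJJ^\rho$ passing through $0$ is the axis of a unique order-$4$ element $c_4\in H$ with $c_4^2=\rho$, so a generic point on it has $H_u=\langle c_4\rangle$ and actually falls under case (c); the parabolic points with $H_u=\langle\rho\rangle$ must therefore lie on one of the three remaining components $\kappa_i+\JJJ_1^{(\rho)}$, $i=1,2,3$. Decompositions $\rho=rr'$ with $r,r'$ commuting reflections of $G$ correspond bijectively to orthogonal pairs of root lines inside $V_{-1}^{(\rho)}$ (with respect to the Hermitian scalar product): two commuting reflections whose product has eigenvalues $(1,-1,-1)$ must reflect in two orthogonal root lines spanning $V_{-1}^{(\rho)}$, and conversely. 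A direct enumeration of the $42$ roots shows that $V_{-1}^{(\rho)}$ contains exactly four root lines, arranged into two orthogonal pairs $(r_i,r'_i)_{i=1,2}$. For each $i$, the fixed locus $\Fix_\JJJ(\langle r_i,r'_i\rangle)\subseteq\Fix_\JJJ(\rho)$ is a union of components of $\JJJ^\rho$ consisting of the axis through $0$ and exactly one other component, which after indexing we label $\kappa_i+\JJJ_1^{(\rho)}$. This is verified by an explicit lattice calculation with the representatives $\kappa_i$ extracted from the proof of Proposition~\ref{parabolic}(ii), testing whether $(r_i-\id)\kappa_j\in\Lambda$. The remaining component $\kappa_3+\JJJ_1^{(\rho)}$, with $\kappa_3=\kappa_1+\kappa_2$, is then fixed by neither commuting pair; checking that no element of $C_G(\rho)=\pm D_8$ outside $\langle\rho\rangle$ simultaneously acts as the identity on $V_1^{(\rho)}$ and fixes $\kappa_3$ modulo $\Lambda$ gives $G_{u_3}=\langle\rho\rangle$. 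The invariance of each $\kappa_i+\JJJ_1^{(\rho)}$ under the full $D_8=C_H(\rho)$ is immediate: $D_8$ commutes with $\rho$ and acts trivially on the finite group $\Fix_\JJJ(\rho)/\JJJ_1^{(\rho)}\simeq(\mathbb Z/2\mathbb Z)^2$.

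For the singularity statement, cases (b) and (c), as well as the subcases $u_1,u_2$ of (a), all have $G_u$ generated by reflections (three for $S'_3$, four for $D'_8$, and two for $(\mathbb Z/2\mathbb Z)^2$), so the image of $u$ in $X$ is smooth by the Chevalley--Shephard--Todd theorem. For $u_3$ in case (a), $G_{u_3}=\langle\rho\rangle$ is generated by an antireflection with eigenvalues $(1,-1,-1)$, so the local quotient at $u_3$ is $\CC\times(\CC^2/\{\pm I\})$, i.e., the non-isolated transverse $A_1$-singularity denoted $\frac12(1,1,0)$. The main obstacle in the whole proof is the lattice bookkeeping in case (a): one must verify by explicit arithmetic modulo $\Lambda$, analogous to the calculation in the proof of Proposition~\ref{parabolic}(ii), that the two commuting reflection pairs act on the four components of $\JJJ^\rho$ exactly as described, singling out the one non-axis component $\kappa_3+\JJJ_1^{(\rho)}$ whose generic points admit no reflection in their $G$-stabilizer.
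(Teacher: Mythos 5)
Your proposal is correct in substance and follows the same skeleton as the paper's proof: reduce to the three conjugacy classes of parabolic cyclic subgroups ($\rho_1$, $h_3$, $h_4$), exclude the identity component of $\JJJ^{\rho}$ in case (a) via the unique order-$4$ cyclic overgroup of $\langle\rho\rangle$, exhibit explicit reflections in $G_u$, and defer the stabilizer bookkeeping to direct lattice computation. The differences are mostly presentational. For (b) and (c) the paper produces the reflection in $G_u$ by writing $c_3$ as a product of two reflections and checking that $r_1$ fixes the diagonal $(x,x,x)$ pointwise modulo $\Lambda$; you instead negate the involutions of $N_H(\langle c_k\rangle)$ — these are the same reflections, and your normalizer/order-count argument for $G_u=S'_3$, $D'_8$ is sound. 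For (a) you give a cleaner organizing principle (the two orthogonal pairs of root lines in $V_{-1}^{(\rho)}$, which is exactly the structure the paper exploits later in the proof of Proposition \ref{non-cyclic}), where the paper simply says ``verified by a direct calculation.'' Two small blemishes: first, your claim that no antireflection has axis coinciding with the axis of $c_4$ is literally false --- $c_4^2$ is itself such an antireflection --- but harmless, since what you need is only that the involutions in $N_H(\langle c_4\rangle)\setminus\langle c_4\rangle$ act by $-1$ on the axis, and that follows because an antireflection is determined by its axis. Second, your justification of the $D_8$-invariance of each curve $\kappa_i+\JJJ_1^{(\rho)}$ is circular as written (you assert the triviality of the action on the component group as its own proof); the clean argument is that $C_H(\rho)$ preserves each of the two orthogonal pairs of root lines in $V_{-1}^{(\rho)}$, hence preserves $\Fix_\JJJ(\langle r_i,r'_i\rangle)$ and therefore each $\kappa_i+\JJJ_1^{(\rho)}$ individually. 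Neither point invalidates the proof.
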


\begin{proof}
The cyclic subgroups of $H$ are all conjugate to those generated by $\rho_1,h_3,h_4$ or $g_7$. Only $\rho_1,h_3,h_4$ are parabolic. We have $|G_u|=2|H_u|$ or $G_u=H_u$.  In the case $|H_u|=2$, we have $H_u=\langle\rho\rangle$ for an element $\rho$ of order 2; all the 21 elements of order 2 in $H$ are anti-reflections conjugate to $\rho_1$, so we may assume $\rho=\rho_1$. It is impossible that $u\in \JJJ_1^{(\rho_1)}$, because every element of order 2 in $H$ is the square of an element of order 4 fixing the same axis, and hence $u$ would then be fixed by a subgroup of order 4 in $H$  at least.
Hence $u$ belongs to 
$\JJJ^{\rho_1}\setminus  \JJJ_1^{(\rho_1)}$, which is the union of the three translates of $\JJJ_1^{(\rho_1)}$ according to Proposition \ref{parabolic} ii): 
$$
[(1,0,0)]+\JJJ_1^{(\rho_1)},\ [(\tfrac\alpha2,\tfrac\alpha2,0)]+\JJJ_1^{(\rho_1)},\ [(1+\tfrac\alpha2,\tfrac\alpha2,0)]+\JJJ_1^{(\rho_1)}.
$$
We can set $\kappa_1=[(1,0,0)]$, $\kappa_2=[(\tfrac\alpha2,\tfrac\alpha2,0)]$; the assertions about the stabilizers are verified by a direct calculation. This provides the case (a).

If $|H_u|=3$, then $H_u=\langle c_3\rangle$ for some element $c_3$ of order 3. Each element of order 3 is a product of two reflections, so $G_u\supset\langle r,c_3\rangle\simeq S_3$, where $r$ is one of those reflections. Let $K=\langle -r,c_3\rangle$. Obviously, $K\simeq \langle r,c_3\rangle\simeq S_3$. From the table of Section 1 describing the lattice of subgroups of $H$, we see that each $3$ is a subgroup of index 2 in a unique $S_3$, its normalizer. The subgroups $S_3$ form one orbit in $H$, so we may choose $K=\langle -r_1,c_3\rangle$,
where $r_1$ is one of our basic reflections and $c_3=c^4=-c$ is the same order-$3$ element as the one used in the proof of Proposition \ref{parabolic} iii). We saw there that the fixed locus $\JJJ^{c_3}$ is the elliptic curve obtained as the image of the diagonal locus of points $(x,x,x)\in\CC^3$ in $\CC^3/\Lambda$. Now $z=(z_1,z_2,z_3)+\Lambda$ is fixed under $r_1:(z_1,z_2,z_3)\mapsto (z_1,z_3,z_2)$ if and only if $r_1(z)-z\in\Lambda$, or $(0,z_3-z_2,z_2-z_3)\in\Lambda$. Obviously, this condition is automatically satisfied for any $z$ of the form $(x,x,x)$, which implies that $G_u\supset\langle r,c_3\rangle\simeq S_3$. This provides the case (b).

By a similar argument, assuming $|H_u|=4$, we reduce the proof to the case when $H_u= \langle h_4\rangle$, where $h_4$ is the element of order $4$ from the proof of Proposition \ref{parabolic} iv). The axis of $h_4$ is the first coordinate axis of $\CC^3$, and one easily verifies that $G_u=D'_8$ for generic point $u$ of the form $(z_1,0,0)+\Lambda$. For non-generic points of this form the stabilizer may be bigger,  but then $H_u$ is non-cyclic, and as we will see in the next proposition, this implies that $u$ is non-parabolic, so all such cases have been treated in the previous section.
\end{proof}

Now we consider the case when $H_u$ is non-cyclic.

\begin{proposition}\label{non-cyclic}
Let $u\in\JJJ$, $u\neq 0$ and assume $H_u$ non-cyclic. Then one of the following cases is realized.
\begin{enumerate}[(a)]
\addtocounter{enumi}{3}
\item $H_u$ contains $S_3$. Then $u\in T_6$, where $T_6$ is the locus of nonzero points fixed by elements of order $6$. This locus, described in Proposition \ref{dissidents} i), is the union of orbits of the three points  $\omega_{01},\omega_{10},\omega_{11}$  with $G$-stabilizers $\pm S_4$ or $\pm S_3$.
\item $H_u$ contains $(\ZZ/2\ZZ)^2$. Then $u$ belongs to the orbit of one of the $16$ fixed points of the elliptic order-$4$ element $h'_4$ from Proposition \ref{elliptic} ii), and the possible $G$-stabilizers of $u$ are $D'_8$,
$\pm D_8$, $S'_4$ and $\pm S_4$. 
\end{enumerate}
In particular, none of these points $u$ is parabolic. Their $G$-stabilizers are generated by reflections, so their images in $X$ are smooth points.
\end{proposition}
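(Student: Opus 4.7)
The plan is to use the subgroup lattice of $H=H_{168}$ from Section~\ref{prelim-on-H168} to reduce ``$H_u$ non-cyclic'' to a short list of minimal scenarios, each handled separately. An inspection of the table of subgroups shows that the non-cyclic subgroups of $H$ whose proper subgroups are all cyclic (the \emph{minimal} non-cyclic ones) are exactly $G_{21}\simeq 7:3$, $S_3$, and $(\ZZ/2\ZZ)^2$. Every non-cyclic $H_u$ therefore contains one of these three. The possibility $G_{21}\subset H_u$ is eliminated at once: $G_{21}$ contains an element of order $7$, and by Proposition~\ref{T7}(i) every point of $T_7$ has cyclic $H$-stabilizer of order $7$; as $u\neq 0$, this contradicts non-cyclicity.

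For case (d), $S_3=\langle c_3,\rho\rangle\subset H_u$ with $c_3$ of order $3$, $\rho$ an antireflection, and $\rho c_3\rho^{-1}=c_3^{-1}$. The key observation is that this defining relation forces $\rho$ to preserve the axis $V_1^{(c_3)}$ and to act on it as $\pm\id$; the sign $+$ would make $\rho$ and $c_3$ simultaneously diagonal in a common eigenbasis of $c_3$, hence commute, contradicting the non-abelianness of $S_3$. So $\rho|_{V_1^{(c_3)}}=-\id$, and the congruence $\rho u\equiv u\pmod\Lambda$ on a lift of $u$ to $V_1^{(c_3)}$ becomes $2u\in \Lambda\cap V_1^{(c_3)}$, i.e.\ $u$ is $2$-torsion on the elliptic curve $\JJJ_1^{(c_3)}$. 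But then the order-$6$ element $c:=-c_3\in G$ (which satisfies $c^3=-\id$) sends $u$ to $-c_3u\equiv -u\equiv u$, so $u\in T_6$. The possible orbits and stabilizers are then read off from Proposition~\ref{dissidents}(i).

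For case (e), $(\ZZ/2\ZZ)^2\subset H_u$ consists of three commuting antireflections, and simultaneous diagonalization places their three $1$-dimensional $+1$-eigenspaces on three pairwise orthogonal lines of $\CC^3$. The table in Section~\ref{prelim-on-H168} shows that the two $H$-conjugacy classes of such Klein four-subgroups both sit inside some $D_8\subset H$; by $H$-equivariance it therefore suffices to solve, for one standardized Klein four-subgroup with axes along the standard coordinate directions, the congruence system $\rho_i u\equiv u\pmod\Lambda$ for $i=1,2,3$. The hard part is this direct arithmetic verification in the $\pazocal O$-lattice $\Lambda=\pazocal O e_1\oplus\pazocal O e_2\oplus\pazocal O e_3$: one must confirm that the resulting $16$ solutions modulo $\Lambda$ are precisely the $H$-orbit of $\{\beta_0,\dots,\beta_{15}\}$ from Proposition~\ref{elliptic}(ii). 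The listing of non-cyclic $H$-stabilizers $(\ZZ/2\ZZ)^2$, $A_4$, $D_8$, $S_4$ and the corresponding $G$-stabilizers $D'_8,S'_4,\pm D_8,\pm S_4$ is then read off from Proposition~\ref{dissidents}(ii); smoothness of the image in $X$ is automatic, since each such $G$-stabilizer was already shown in Propositions~\ref{dissidents}(i,ii) to be generated by complex reflections, and Chevalley--Shephard--Todd applies.
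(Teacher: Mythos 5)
Your overall strategy --- reduce via the subgroup lattice to the minimal non-cyclic subgroups $7{:}3$, $S_3$, $(\ZZ/2\ZZ)^2$, eliminate $7{:}3$ by Proposition \ref{T7}(i), and then compute fixed loci --- is exactly the paper's. Your treatment of case (d) is correct and in fact slightly more conceptual than the paper's: where the paper solves $x(2,2,2)\in\Lambda$ directly on the diagonal, you note that $\rho$ must act as $-\id$ on the axis $V_1^{(c_3)}$ (otherwise its $+1$-eigenline would be that axis, forcing $\rho=-\id$ on the orthogonal complement and hence $\rho c_3=c_3\rho$), so $u$ is a $2$-torsion point of $\JJJ_1^{(c_3)}$ and is therefore fixed by the order-$6$ element $c=-c_3$. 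Both arguments land on the same four points.

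The gap is in case (e). You assert that, because both $H$-conjugacy classes of Klein four-subgroups sit inside copies of $D_8$, ``by $H$-equivariance it suffices'' to treat one standardized $2^2$ with axes along the coordinate directions. This reduction is invalid: the two classes (rows 10 and 11 of the subgroup table, each of length $7$) are not conjugate in $H$, nor in $G=\{\pm1\}\times H$ (they are interchanged only by an outer automorphism of $H_{168}$), and containment in a $D_8$ does not identify them --- indeed each single $D_8$ has one maximal Klein four-subgroup from each class. Moreover only one of the two classes is realized with axes along the coordinate axes; a representative of the other is $\langle\rho_{(0,\alpha,\alpha)},\rho_{(0,\alpha,-\alpha)}\rangle$, whose axes are the $z_1$-axis and the lines $z_2=\pm z_3$. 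As written, your argument leaves the seven subgroups of that second class, and the points they fix, untreated. The repair is what the paper does: run the same congruence computation separately for a representative of each class (each yields $16$ fixed points, whose $G$-stabilizers are then checked to contain an elliptic order-$4$ element conjugate to $h'_4$), so the conclusion of (e) does hold, but it must be verified twice, not once.
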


\begin{proof}
As $H_u$ is non-cyclic, it contains at least two distinct cyclic subgroups generated by elements from the orbits of $\rho_1,h_3,h_4,g_7$ or $g_7^{-1}$. We can disregard the elements of order 7, because for a nonzero fixed point of such an element, its stabilizer is of order 7 and hence is cyclic. So, we have to consider only the cases when $H_u$ contains two cyclic subgroups of orders 2, 3 or 4.

The first case we will consider is when $H_u$ contains subgroups of orders 2 and 3. From the table in Section 1 describing the lattice of subgroups of $H$ we see that then $H_u$ is one of the subgroups $S_3$, $A_4$, $S_4$. 

Assume that $H_u\supset S_3$. As the subgroups $S_3$ form one orbit, we can choose $S_3=\langle -r_1,c_3\rangle$ as in the proof of the previous proposition. As before, $\JJJ^{c_3}$ is the elliptic curve obtained as the image of the diagonal of $\CC^3$, that is the locus of points of the form $(x,x,x)$ modulo $\Lambda$, and $z=(z_1,z_2,z_3)+\Lambda$ is fixed under $-r_1:(z_1,z_2,z_3)\mapsto (-z_1,-z_3,-z_2)$ if and only if $r_1(z)+z\in\Lambda$, or $(2z_1,z_2+z_3,z_3+z_2)\in\Lambda$. For a point $z$ of the form $(x,x,x)$ the latter condition is equivalent to $x(2,2,2)\in\Lambda$, which gives four points stabilized by $S_3$:
$$
\JJJ^{S_3}=\left\{\iota_1\tfrac{(\bar\alpha,\bar\alpha,\bar\alpha)}2+\iota_2(1,1,1)\right\}_{\iota_1,\iota_2=0,1}
\ \ (\mod \Lambda).
$$
We now see that the three of these points different from 0 belong to the locus $T_6$ from Proposition \ref{dissidents} i), which ends the proof for the case when $H_u\supset S_3$.

We will not consider separately the cases $H_u\supset A_4$ or $S_4$, because in these cases $H_u$ contains a subgroup $\simeq (\ZZ/2\ZZ)^2$. So we will just consider one case when $H_u\supset (\ZZ/2\ZZ)^2$.

There are two orbits of subgroups $2^2$ in $H$, and respectively two orbits of their normalizers $S_4$. 
For each ``positive'' root $e\in\pazocal R_0$, there are two pairs $(e',e'')$, $(f',f'')$ of orthogonal roots in $\pazocal R_0$, such that
$e'\perp e''$, $f'\perp f''$, roots from different pairs being non-orthogonal. Say, if $e=(2,0,0)$, then, for an appropriate choice of $\pazocal R_0$, the two orthogonal pairs are
$(e',e'')=((0,2,0),(0,0,2))$ and $(f',f'')=((0,\alpha,\alpha),(0,\alpha,-\alpha))$. We can choose for representatives of the two orbits of $2^2$ in $H$ the subgroups $H_{2^2}=\langle \rho_{e'},\rho_{e''}\rangle$ and
$H'_{2^2}=\langle \rho_{f'},\rho_{f''}\rangle$, and  $H_{2^2}\cap H'_{2^2}=\langle \rho_e\rangle$. We have:
$$
(z_1,z_2,z_3)+\Lambda\in\JJJ^{H_{2^2}}\equi
(2z_1,2z_2,0)\equiv (0,2z_2,2z_3)\equiv 0 \mod \Lambda
$$
$$
\equi z=(z_1,z_2,z_3)\in \bar\Lambda=\ZZ\tfrac{(\alpha,\alpha,\alpha)}2+\pazocal O^3.
$$
As $[\bar\Lambda:\Lambda]=16$, $\#\JJJ^{H_{2^2}}=16$. Similarly one verifies that $\#\JJJ^{H'_{2^2}}=16$. Moreover, by inspecting the $G$-stabilizers of the 16 fixed points, we observe that each of them contains at least one elliptic element of order 4.
All such elements are conjugate to $h'_4$, thus the possible stabilizers $G_u$ in (d) are those appearing in Proposition \ref{elliptic} ii), except for $D_u\simeq C_4$, for which $H_u\simeq C_2$ is too small.

It remains to consider the case when $H_u$ contains two cyclic subgroups, one of which has order 4. Denoting by $c_4$ a generator of the latter subgroup of order 4, we see that $H_u$ contains two distinct cyclic subgroups, one of which is of order 2, generated by $c_4^2$, and this brings us to one of the cases treated above.
\end{proof}

Now we are ready to enumerate the singularities of the quotient variety $X$. 
We say that a variety (as always in this paper, over $\CC$) is strongly simply connected if its smooth locus is connected and simply connected.

\begin{theorem}\label{SingX}
The quotient $X=\JJJ/G$ is a normal strongly simply connected variety whose singular locus is the union of two irreducible components, $\PP^1=\ell$ and an isolated point $p$. Denoting $\pi:\JJJ\to X$ the natural map, we have $p=\pi(T_7)$ and $\ell=\pi(\kappa_3+\JJJ_1^{(\rho)})$, where $T_7$ is the orbit of fixed points of elements of order $7$, described in Prop. \ref{T7}, $\rho$ is an anti-reflection and $\kappa_3+\JJJ_1^{(\rho)}$ is the elliptic curve in the fixed locus of $\rho$ defined in Prop. \ref{spe-para} (a).\par
The singularity at $p$ is of analytic type $\frac17(1,2,4)$. At all but one points of $\ell$, the singularity of $X$ is of type $\frac12(1,0,1)$, that is $ \CC\times A_1$, the Cartesian product of  $\CC$ with a surface du Val singularity of type $A_1$.
The unique point $q$ of $\ell$ where the type of singularity changes is the image of the orbit of one of the points $\beta_{\iota_0\iota_1\iota_2\iota_3}$ from the last column of the table in Prop. \ref{dissidents} ii), say $\beta_{0011}$.  The type of singularity at $q$ is $\frac14(1,2,3)$.
\end{theorem}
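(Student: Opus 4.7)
Normality of $X$ is automatic, as it is the quotient of the smooth variety $\JJJ$ by the finite group $G$. I will read off the singular locus orbit by orbit via the Chevalley--Shephard--Todd criterion: $\pi(u)$ is a smooth point of $X$ iff $G_u$ is generated by reflections. Surveying every stabilizer produced in Propositions \ref{T7}--\ref{non-cyclic}, one finds that the non-reflection-generated case is realized in exactly three situations: (i) $G_u \simeq C_7$ on the orbit $T_7$ (Prop \ref{T7}); (ii) $G_u \simeq C_4$ at the four $\beta$-points in the last column of the table in Prop \ref{dissidents}(ii); (iii) $G_u = \langle \rho \rangle$ at generic points of the $21$ curves $\kappa_3 + \JJJ_1^{(\rho)}$, $\rho$ running over the antireflections (Prop \ref{spe-para}(a)). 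In every other case the stabilizer has already been verified to be reflection-generated.

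Cases (i) and (iii) produce the isolated singular point $p$ of type $\frac{1}{7}(1,2,4)$ (Prop \ref{T7}) and a one-parameter family of $\frac{1}{2}(1,0,1) \simeq \CC \times A_1$ singularities (Prop \ref{spe-para}(a)). The $21$ curves of (iii) form a single $G$-orbit, since the $21$ antireflections do, so they descend to a single irreducible curve $\ell$. To identify $\ell$ with $\PP^1$, I will use that the setwise $G$-stabilizer of $E := \kappa_3 + \JJJ_1^{(\rho)}$ is $\pm D_8$ of order $16$, with generic pointwise stabilizer $\langle \rho \rangle$ of order $2$; the effective action of the order-$8$ quotient group on the elliptic curve $E$ has a fixed point (the preimage of $q$), hence $\ell = E/(\text{group}) \simeq \PP^1$.

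To place the four $C_4$-stabilized $\beta$-points of case (ii) on $\ell$: if $G_\beta = \langle c_4 \rangle$, then $c_4^2$ is an order-$2$ element of $H$, hence an antireflection $\rho$, so $\beta \in \JJJ^\rho$. Since $C_4$ has only one subgroup of order $2$, $\beta$ cannot lie on $\JJJ_1^{(\rho)}$ (generic $G$-stabilizer $D'_8$, by Prop \ref{spe-para}(c)) nor on $\kappa_1 + \JJJ_1^{(\rho)}$ or $\kappa_2 + \JJJ_1^{(\rho)}$ (generic $G$-stabilizer $(\ZZ/2\ZZ)^2$, by Prop \ref{spe-para}(a)), which forces $\beta \in \kappa_3 + \JJJ_1^{(\rho)}$. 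The four such $\beta$'s form a single $G$-orbit and hence yield one point $q \in \ell$, with singularity $\frac{1}{4}(1,2,3)$ by Prop \ref{dissidents}(ii).

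The step I expect to be the main obstacle is strong simple connectivity. Connectedness of $X^{\mathrm{sm}}$ is immediate from the irreducibility of $X$. For simple connectivity, I plan to work with $\tilde{\pi}\colon \CC^3 \to X = \CC^3/\tilde G_{336}$. Let $S \subset \CC^3$ be the union of the fixed loci of the non-reflection-generated stabilizer subgroups identified in (i)--(iii): by inspection $S$ has complex codimension $\geq 2$, so $\pi_1(\CC^3 \setminus S) = 1$ and $\pi_1((\CC^3 \setminus S) \setminus M) = \pi_1(\CC^3 \setminus M)$, where $M$ is the mirror arrangement. The étale cover $(\CC^3 \setminus S) \setminus M \to X^{\mathrm{sm}} \setminus \pi(M)$ yields a short exact sequence whose kernel $\pi_1(\CC^3 \setminus M)$ is generated by meridians around mirror hyperplanes. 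Reattaching $\pi(M)$ to obtain $X^{\mathrm{sm}}$ amounts to killing, in $\pi_1$, a meridian $\gamma_r$ around each mirror image: such a $\gamma_r$ maps to the reflection $r \in \tilde G_{336}$, while $\gamma_r^2$ represents a generator of the kernel $\pi_1(\CC^3 \setminus M)$. Killing all $\gamma_r$ therefore annihilates both the image $\tilde G_{336}$ (since reflections generate it) and the entire kernel, leaving $\pi_1(X^{\mathrm{sm}}) = 1$.
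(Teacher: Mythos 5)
Your proposal is correct, and for the singularity analysis it follows essentially the same route as the paper: normality is automatic, and the singular locus is read off from the complete inventory of stabilizers in Propositions \ref{T7}--\ref{non-cyclic} via the criterion that $\pi(u)$ is smooth if and only if $G_u$ is generated by reflections. You do, however, supply genuine content at two points where the paper's proof is terse. First, the paper does not spell out why $\ell\simeq\PP^1$ nor why the $C_4$-points lie on the curves $\kappa_3+\JJJ_1^{(\rho)}$; your arguments for both are sound (elimination of the other three translates of $\JJJ_1^{(\rho)}$ using that $C_4$ has a unique order-$2$ subgroup, and the fact that $c_4$ acts on $E=\kappa_3+\JJJ_1^{(\rho)}$ nontrivially with a fixed point, so the quotient has genus $0$). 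One detail worth making explicit there: to identify $\ell$ with $E/N_G(E)$ you should note that distinct curves in the $G$-orbit of $E$ cannot meet, since a common point would be fixed by two distinct antireflections and hence, by Proposition \ref{non-cyclic}, would have a reflection-generated stabilizer, contradicting the fact that all of $\pi(E)$ lies in $\Sing X$ (which is closed and contains the generic point of $\pi(E)$). Second, for strong simple-connectedness the paper simply cites \cite{TY}, \cite{Schw}, \cite{Be-Sch3}, whereas you sketch the standard covering-space proof underlying those references (codimension bound on the non-reflection locus, meridians $\gamma_r$ mapping onto reflections with $\gamma_r^2$ a meridian of a mirror upstairs, Armstrong-type killing of the kernel). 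The sketch is the right argument; to complete it you should state that it is the \emph{conjugates} of the $\gamma_r^2$ (one conjugacy class per irreducible component of $M$) that normally generate $\pi_1(\CC^3\setminus M)$, and invoke the surjection $\pi_1(X^{\mathrm{sm}}\setminus\pi(M))\to\pi_1(X^{\mathrm{sm}})$ whose kernel is normally generated by meridians. There is no gap; the only real divergence from the paper is your choice of a self-contained topological argument over a citation.
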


\begin{proof}
The strong simply-connectedness follows from \cite[Theorem 3.2.1]{TY}; see also \cite{Schw} or  \cite[Prop. 0.1]{Be-Sch3}. In fact, for the quotients of $\CC^n$ by complex crystallographic groups, the property of the group to be generated by affine complex reflections is {\em equivalent} to the strong simply-connectedness of the quotient. 

Singularities of $X$ may only occur in the image of the points of $\JJJ$ whose $G$-stabilizers are not generated by reflections. We made a complete inventory of possible $G$-stabilizers. The orbits of points whose $G$-stabilizers are not generated by reflections are those mentioned in the statement of the theorem.
\end{proof}

We note that the weighted projective space $\PP(1,2,4,7)$ is also strongly simply connected and has the same singularities as $X$, which provides some evidence towards the conjecture stated in the introduction.


\begin{thebibliography}{MMMMMM}

\bibitem[Ba-Itz]{Ba-Itz} Bauer, M.; Itzykson, C.: A case study in finite groups: $\mathbf{PSL}_2(\FF_7)$. Internat. J. Modern Phys. {\bf A5}, 3125--3153 (1990).


\bibitem[Be-Sch1]{Be-Sch1} Bernšteĭn, I. N.; Švarcman, O. V.: Chevalley's theorem for complex crystallographic Coxeter groups. (Russian) Funktsional. Anal. i Prilozhen. {\bf 12}, 79--80 (1978).

\bibitem[Be-Sch2]{Be-Sch2} Bernstein, J.; Schwarzman, O.: Complex crystallographic Coxeter groups and affine root systems. J. Nonlinear Math. Phys. {\bf 13}, 163--182 (2006).

\bibitem[Be-Sch3]{Be-Sch3} Bernstein, J.; Schwarzman, O.: Chevalley's theorem for the complex crystallographic groups. J. Nonlinear Math. Phys. {\bf 13}, 323--351 (2006).









\bibitem[Co]{Co} Cohen, A. M.: Finite complex reflection groups, Ann. Sci. Ecole Norm. Sup. (4) {\bf 9}, 379--436 (1976); Erratum,  {\bf 11}, 613 (1978).

\bibitem[CoL]{CL}
Connor, T.; Leemans, D.: An atlas of subgroup lattices of finite almost simple groups. Ars Math. Contemp. {\bf 8}, 259--266 (2015).

\bibitem[D]{Deraux}
Deraux, M.: Non-arithmetic lattices from a configuration of elliptic curves in an Abelian surface, Comment.
Math. Helv. { \bf 93}, 533--554 (2018).


 
 
 \bibitem[Eightfold]{Eightfold} The eightfold way. The beauty of Klein's quartic curve. Edited by Silvio Levy. Mathematical Sciences Research Institute Publications, 35. Cambridge University Press, Cambridge, 1999. x+331 pp.

 \bibitem[FMW]{FMW}
 Friedman, R.; Morgan, J. W.; Witten, E.: Principal $G$-bundles over elliptic curves. Math. Res. Lett. 5, 97--118 (1998). 
 
 \bibitem[GM]{GM}
 Goryunov, V., Man, S. H.: The complex crystallographic groups and symmetries of $J_{10}$, Singularity theory and its applications, 55--72, In: Adv. Stud. Pure Math., 43, Math. Soc. Japan, Tokyo, 2006.
 
 
 
  


\bibitem[Kac-P]{Kac-P}
 Kac, V. G.; Peterson, D. H.: Infinite-dimensional Lie algebras, theta functions and modular forms. Adv. in Math. {\bf 53}, 125--264 (1984).
 


\bibitem[Klein]{Klein}
Klein, F.: Über die Transformationen siebenter Ordnung der
elliptischen Funktionen, Math. Ann. {\bf 14}, 428--471 (1879). Œuvres, Tome III, 90--136.


\bibitem[KRR]{KRR}
Koziarz, V.; Rito, C.; Roulleau, X.: The Bolza curve and some orbifold ball quotient surfaces, Michigan Math. J. {\bf 70}, 423--448 (2021).

\bibitem[KTY]{KTY}
Kaneko, J.; Tokunaga, S.; and Yoshida, M.: Complex crystallographic groups II, J. Math. Soc. Japan, {\bf 34} (1982), 595--605 (1982).


\bibitem[Loo]{Loo} Looijenga, E.: Root systems and elliptic curves. Invent. Math. {\bf 38}, 17--32 (1976/77).

\bibitem[M2]{M2} Grayson, D., and Stillman, M.:
Macaulay2, a software system for research   in algebraic geometry, Available at 
{\tt www.math.uiuc.edu/Macaulay2/}.



\bibitem[Mazur]{Mazur}
 Mazur, B.: Arithmetic on curves. Bull. Amer. Math. Soc. (N.S.)  14, 207--259 (1986).
 

\bibitem[Po]{Po}
Popov, V. L.: Discrete complex reflection groups. Communications of the Mathematical Institute, Rijksuniversiteit Utrecht, 15. Rijksuniversiteit Utrecht, Mathematical Institute, Utrecht, 1982. 89 pp.

\bibitem[Schw1]{Schw1}
Švarcman, O. V.: A Chevalley theorem for complex crystallographic groups generated by reflections in the affine space $\CC^2$, Uspekhi Mat. Nauk {\bf 34}, 249--250 (1979).

\bibitem[Schw2]{Schw}
Shvartsman, O. V.: Cocycles of complex reflection groups
and the strong simple-connectedness of quotient spaces, in: Problems in group theory
and homological algebra, Editor: Onishchik A., Yaroslav. Gos. Univ., Yaroslavl, 1991,
32--39.


\bibitem[Sh-To]{Sh-To}
Shephard, G. C., Todd, J. A.:
Finite unitary reflection groups,
Canad. J. Math. {\bf 6}, 274--304 (1954).


\bibitem[TY]{TY}
Tokunaga, S.; and Yoshida, M.: Complex crystallographic groups I, J. Math. Soc. Japan, {\bf 34} (1982), 581--593 (1982).


\bibitem[Wi]{W}
 Wirthmüller, K.: Root systems and Jacobi forms, Compositio Math. {\bf 82}, 293--354 (1992).

\end{thebibliography}
\end{document}